\numberwithin{equation}{section}
\newcommand{\margnote}[1]{
\ifthenelse{\boolean{shownotes}}%
{\marginpar{\raggedright\tiny\texttt{#1}}}%
{}%
}
\newcommand{\hole}[1]{
\ifthenelse{\boolean{shownotes}}%
{\begin{center} \fbox{ \rule {.25cm}{0cm}
\rule[-.1cm]{0cm}{.4cm} \parbox{.85\textwidth}{\begin{center}
\texttt{#1}\end{center}} \rule {.25cm}{0cm}}\end{center}}
{}
}
\theoremstyle{plain}
\newtheorem{lemma}{Lemma}[section]
\newtheorem{theorem}[lemma]{Theorem}
\newtheorem{proposition}[lemma]{Proposition}
\theoremstyle{definition}
\newtheorem{remark}[lemma]{Remark}
\newtheorem{definition}[lemma]{Definition}
\theoremstyle{remark}
\newcommand{\R}{\mathbb{R}}
\newcommand{\C}{\mathbb{C}}
\newcommand{\N}{\mathbb{N}}
\newcommand{\bbS}{\mathbb{S}}
\newcommand{\bw}{\boldsymbol{w}}
\newcommand{\bu}{\boldsymbol{u}}
\newcommand{\tiu}{\widetilde{u}}
\newcommand{\tiv}{\widetilde{v}}
\newcommand{\cT}{{\mathcal{T}}}
\newcommand{\cL}{{\mathcal{L}}}
\newcommand{\cD}{{\mathcal{D}}}
\newcommand{\cA}{{\mathcal{A}}}
\newcommand{\cB}{{\mathcal{B}}}
\newcommand{\vep}{\varepsilon}
\renewcommand{\Re}{\mathrm{Re}\,} 
\renewcommand{\Im}{\mathrm{Im}\,}
\newcommand{\tr}{\mathrm{tr}\,}
\newcommand{\sgn}{\mathrm{sgn}\,}
\newcommand{\taum}{\tau_{\mathrm{max}}}
\newcommand{\btau}{\overline{\tau}}
\newcommand{\ep}{\epsilon}
\newcommand{\bJ}{\mathbf{J}}
\newcommand{\bB}{\mathbf{B}}
\newcommand{\bC}{\mathbf{C}}
\newcommand{\bA}{\mathbf{A}}
\newcommand{\bI}{\mathbf{I}}
\newcommand{\bM}{\mathbf{M}}
\newcommand{\bD}{\mathbf{D}}
\newcommand{\bF}{\mathbf{F}}
\newcommand{\ess}{\sigma_\mathrm{\tiny{ess}}}
\newcommand{\ptsp}{\sigma_\mathrm{\tiny{pt}}}
\newcommand{\Ldper}{L^2_\mathrm{\tiny{per}}}
\newcommand{\Huper}{H^1_\mathrm{\tiny{per}}}
\newcommand{\Hmper}{H^m_\mathrm{\tiny{per}}}
\newcommand{\<}{\langle}
\renewcommand{\>}{\rangle}
\begin{document}

\title[Instability of small-amplitude hyperbolic periodic waves]{Spectral instability of small-amplitude periodic waves for hyperbolic non-Fickian diffusion advection models with logistic source}

\author[E. \'{A}lvarez]{Enrique \'{A}lvarez}
 
\address{{\rm (E. \'{A}lvarez)} Instituto de 
Investigaciones en Matem\'aticas Aplicadas y en Sistemas\\Universidad Nacional Aut\'onoma de 
M\'exico\\ Circuito Escolar s/n, Ciudad Universitaria, C.P. 04510\\Cd. de M\'{e}xico (Mexico)}

\email{enrique.alvarez@ciencias.unam.mx}

\author[R. Murillo]{Ricardo Murillo}

\address{{\rm (R. Murillo)} Colegio de Ciencias y Humanidades\\Universidad Nacional Aut\'onoma de 
M\'exico\\ Prol. Perif\'erico Oriente s/n Esq. Sur 24, Col. Agr\'{\i}­cola Oriental, C.P. 08500\\Cd. de M\'{e}xico (Mexico)}

\email{gaudeamus18@gmail.com}

\author[R. G. Plaza]{Ram\'on G. Plaza}

\address{{\rm (R. G. Plaza)} Instituto de 
Investigaciones en Matem\'aticas Aplicadas y en Sistemas\\Universidad Nacional Aut\'onoma de 
M\'exico\\ Circuito Escolar s/n, Ciudad Universitaria, C.P. 04510\\Cd. de M\'{e}xico (Mexico)}

\email{plaza@mym.iimas.unam.mx}

\begin{abstract}
A hyperbolic model for diffusion, nonlinear transport (or advection) and production of a scalar quantity, is considered. The model is based on a constitutive law of Cattaneo-Maxwell type expressing non-Fickian diffusion by means of a relaxation time relation. The production or source term is assumed to be of logistic type. This paper studies the existence and spectral stability properties of spatially periodic traveling wave solutions to this system. It is shown that a family of subcharacteristic periodic waves emerges from a local Hopf bifurcation around a critical value of the wave speed. These waves have bounded fundamental period and small-amplitude. In addition, it is shown that these waves are spectrally unstable as solutions to the hyperbolic system. For that purpose, it is proved that the Floquet spectrum of the linearized operator around a wave can be approximated by a linear operator whose point spectrum intersects the unstable half plane of complex numbers with positive real part. 
\end{abstract}

\keywords{hyperbolic periodic traveling waves, non-Fickian diffusion, Floquet spectrum, spectral instability}

\subjclass[2010]{35L02, 35B35, 35B32}

\maketitle

\setcounter{tocdepth}{1}



\section{Introduction}

Scalar viscous balance laws in one space dimension are equations of the form
\begin{equation}
\label{VBL}
u_t + f(u)_x = u_{xx} + g(u),
\end{equation}
where $u = u(x,t) \in \R$, $x \in \R$ and $t > 0$ denote the space and time variables, respectively, and $f = f(u)$ and $g = g(u)$ are nonlinear functions of the unknown $u$. This class of models combines nonlinear advection or transport (represented by the term $f(u)_x$), a reaction or production term (denoted by the function $g$) and diffusion or viscosity effects (modeled by the Laplace operator) into one single equation describing the dynamics of a scalar quantity $u$. For an abridged list of references on scalar viscous balance laws, see \cite{CroMas07,AlPl21,HaeSa06,Hae03}.


From a modeling point of view, equations of the form \eqref{VBL} underly two independent components. The first component is a universal balance law,
\begin{equation}
\label{UBlaw}
u_t + v_x = g(u),
\end{equation}
expressing the conservation/production of a quantity $u$ in a one dimensional spatial domain, which diffuses and gets transported according to a flux $v$ and grows/decays in a way dictated by the reaction function $g$. Equation \eqref{UBlaw} is a localized version of the global balance law in integral form,
\[
\frac{d}{dt} \int_a^b u(x,t) \, dx + v(b,t) - v(a,t) = \int_a^b g(u(x,t)) \, dx,
\]
where $(a,b) \subset \R$ is an arbitrary length element, and $u$ and $g(u)$ are interpreted as mass and production densities per unit length. Both diffusion and transport can be incorporated into the model via the function $v$, that determines the flux of the quantity $u$ across boundary elements (in the present one-dimensional setting, at $x = a,b$) of arbitrary domains. In general, $v$ depends on $u$ and its derivatives via a constitutive law. The second component is precisely the choice of this constitutive law. Fick's standard theory of diffusion states that the flux $v$ must be proportional to the gradient of $u$ (cf. \cite{Crank75,Fick95}). If we combine it with a transport mechanism and normalize it so that the diffusion coefficient is equal to one, then the Fickian constitutive law reads
\begin{equation}
\label{fick}
v = f(u) - u_x,
\end{equation}
stating that particles are transported according to the function $f$ and exhibit Fickian diffusion. Substitution of \eqref{fick} into the balance law \eqref{UBlaw} yields the parabolic equation \eqref{VBL}.

Diffusion is not, however, always modeled by a parabolic mechanism. Motivated by the unphysical infinite speed of propagation of initial disturbances associated to parabolic equations, many non-Fickian constitutive laws of diffusion have been proposed in the literature (for related discussions, see \cite{Holm93,JoPr89} and the references therein). A well-known modification has been introduced by Cattaneo \cite{Catt49,Catt58} (see also \cite{Verno58}), based on an early, pioneering intuition by J. C. Maxwell \cite{Maxw1867}, which states that there must be a required time-lag for the flux to adjust to the gradient changes, resulting into the constitutive law
\begin{equation}
\label{CattMaxw}
\tau v_t + v = f(u) - u_x.
\end{equation}
Here $\tau > 0$ is a positive parameter that represents the intrinsic relaxation time or the time scale required for the onset of diffusion within a volume element once the gradient has been established. Relation \eqref{CattMaxw} is a \emph{constitutive law of Cattaneo-Maxwell type}. In applications, the parameter $\tau$ is usually small: notice that in the limit when $\tau \to 0^+$ one (formally) recovers the Fickian constitutive law \eqref{fick}. 

In this paper we consider the constitutive law \eqref{CattMaxw} of Cattaneo-Maxwell type together with the universal balance law \eqref{UBlaw}, resulting into the hyperbolic system
\begin{equation}
\label{hypVBL}
\begin{aligned}
u_t + v_x &= g(u),\\
\tau v_t + u_x &= f(u) -v,
\end{aligned}
\end{equation}
which models nonlinear reaction and transport processes coupled with a non-Fickian diffusion mechanism for the quantity $u$. We only assume that the nonlinear advection function $f$ is of class $C^3(\R)$. Regarding the source term we suppose, for concreteness, that $g \in C^3(\R)$ and that it is of \emph{logistic type}, satisfying 
\begin{equation}
\label{A2}
	\begin{aligned}
	&g(0) = g(1) =0,\\
	&g'(0) > 0, \, g'(1) < 0,\\
	&g(u)>0 \, \textrm{ for all } \, u \in(0,1),\\
	&g(u)<0 \, \textrm{ for all } \, u \in (-\delta,0), \quad \text{some } \, \delta > 0.
	\end{aligned}
\end{equation}
Reaction functions of logistic type are used to model dynamics of populations with limited resources, which saturate into a stable equilibrium point associated to an intrinsic carrying capacity (in this case, the equilibrium state $u = 1$). They are also known as source functions of Fisher-KPP or monostable type (cf. \cite{Fis37,KPP37}).
\begin{remark}
Although it is posed as a system, \eqref{hypVBL} is essentially a scalar model for the density $u$. Indeed, one can eliminate the variable $v$ by a procedure known as \emph{Kac's trick} \cite{Hil97,Kac74}, consisting of cross-differentiation of both equations in \eqref{hypVBL}. The result is the following one-field equation for $u$,
\begin{equation}
\label{waveeq}
\tau u_{tt} - u_{xx} + (1- \tau g'(u)) u_t + f(u)_x  = g(u).
\end{equation}
Notice that this is a nonlinear wave equation with damping term and characteristic velocity given by $1- \tau g'(u)$ and by $c_*^2 = 1/\tau$, respectively.
\end{remark}

In this paper, we also assume that the physical parameter $\tau$ satisfies the relation
\begin{equation}
\label{A3}
0 < \tau < \taum := \Big( \sup_{u \in (-\delta, 1)} |g'(u)| \, \Big)^{-1},
\end{equation}
that is, the relaxation time is bounded above by a typical time scale associated to the reaction. It is to be observed that condition \eqref{A3} is tantamount to the positivity of the damping coefficient in the one-field equation \eqref{waveeq}, which is usually imposed in order to guarantee that positive initial data produce positive solutions (see \cite{LMPS16,Had99}). According to custom, we refer to \eqref{A3} as a positive damping condition.

We are interested on traveling wave solutions to system \eqref{hypVBL} of the form
\begin{equation}
\label{TWS}
(u,v)(x,t) = (U,V)(x-ct),
\end{equation}
where the profile functions $(U,V)(\cdot)$ depend only on the Galilean variable of translation, $\xi = x - ct$, and are periodic functions of its argument. The parameter $c \in\R$ is the \emph{speed of the wave} and it is assumed to satisfy 
\begin{equation}
\label{subchar}
c^2 \tau < 1.
\end{equation}
Relation \eqref{subchar} is called the \emph{subcharacteristic condition}, because it can be interpreted as the corresponding subcharacteristic relation for hyperbolic systems with relaxation \cite{L87}, in the sense that the equilibrium wave velocity cannot exceed the characteristic speed of the damped wave equation \eqref{waveeq}. 

The purpose of this paper is to study the existence and stability properties of bounded, spatially periodic traveling wave solutions to system \eqref{hypVBL}. In a recent contribution \cite{AlPl21}, we studied this problem in the parabolic case of scalar viscous balance laws of the form \eqref{VBL}. Our goal is to examine the hyperbolic variation of these diffusion-reaction-advection mechanisms under the perspective of nonlinear wave propagation. Regarding the existence of periodic traveling waves, we prove that a family of small-amplitude subcharacteristic periodic waves emerges from a Hopf bifurcation around a critical value of the wave speed. These waves have finite fundamental period and amplitude which is of order of the square root of the distance between the speed and the critical speed (see Theorem \ref{theoexist} below). The instability of the equilibrium point of the reaction is responsible for the existence of the waves and for the change of stability of the equilibrium point as the wave speed crosses the bifurcation critical value.

Next, we study the stability of these bounded periodic waves as solutions to the hyperbolic system \eqref{hypVBL}. As a customary first step, we linearize the equations around each periodic wave and analyze the resulting spectral problem. In the case of periodic waves, the linearized operator has periodic coefficients, leading to the concept of the \emph{Floquet spectrum} (see \cite{KaPro13,JMMP14,Grd1} or section \S \ref{secstab} below). We then recast the problem of locating the continuous or Floquet spectrum as a point spectral problem on an appropriate periodic space via a Bloch-type transformation. Since the waves have small amplitude, we follow previous analyses \cite{AlPl21,CheDu,KDT19} and prove that the spectrum can be approximated by the spectrum of a constant coefficient operator around the zero solution. The latter is determined by a dispersion relation that intersects the unstable half plane of complex numbers with strictly positive real part. Then we apply perturbation theory of linear operators (cf. \cite{Kat80,HiSi96}) to show that the unstable point eigenvalue of the unperturbed operator can be approximated by neighboring analytic curves of simple eigenvalues of the full problem, proving in this fashion the spectral instability of the Floquet spectra of the associated periodic wave (see Theorem \ref{theoinst} below). For that purpose, a key ingredient is to show that the perturbed problem encompasses relatively bounded perturbations. 

Hyperbolic models of reaction and diffusion of Cattaneo-Maxwell type (without advection terms) have been the subject of analytical and numerical investigations (see, e.g., \cite{DunOth86,Fedo98,Had88}), in particular from a perspective of traveling front propagation (cf. \cite{Had94,BCN14,LMPS16,LMPS19}). Since the Fick diffusion law can be retrieved from a mean field limit of a Brownian motion, it is worth mentioning that another way to interpret the Cattaneo-Maxwell law (at least in a one-dimensional setting) is through generalizations of correlated random walks (cf. \cite{Had96,Hil98}). The incorporation of transport terms, which yields the hyperbolic reaction-diffusion-convection model considered in this paper (system \eqref{hypVBL} or, equivalently, equation \eqref{waveeq}) has been also introduced and studied before in the literature by different authors (see, for instance, \cite{Hrrm12,GoOl21,GCNPC10} and some of the references therein). Up to our knowledge, however, this is the first contribution to the theory of periodic traveling waves for the system under consideration.

\subsection{Main results}
\label{mainr}
The first theorem states the existence of small-amplitude periodic waves for the hyperbolic system \eqref{hypVBL}. It is based on a local Hopf bifurcation with the wave speed as a bifurcation parameter.

\begin{theorem}[existence of small-amplitude periodic waves]
\label{theoexist}
Suppose that $f, g \in C^3(\R)$ satisfy \eqref{A2}. Then there exists a parameter value $0 < \btau < \taum$ such that for each fixed $\tau \in (0,\btau)$ we define the critical subcharacteristic speed,
\begin{equation}
\label{defc0}
c_0 = c_0(\tau) := \frac{f'(0)}{1 - \tau g'(0)},
\end{equation}
satisfying $c_0(\tau)^2 \tau < 1$, as well as the parameters
\begin{equation}
\label{defome0}
\omega_0 = \omega_0(\tau) := \sqrt{(1-c_0^2 \tau) g'(0)} \, > 0,
\end{equation}
\begin{equation}
\label{defa0}
a_0 = a_0(\tau) := \frac{(1-c_0^2\tau)^{-1}}{16} \left[ c_0 \tau g'''(0) + f'''(0) - \frac{1}{\omega_0} \big(c_0 \tau g''(0) + f''(0)\big) \big(g''(0) + c_0 f''(0)\big)\right].
\end{equation}
Assume that $a_0 \neq 0$. Then one can find $\epsilon_1 > 0$ sufficiently small such that for each $\epsilon \in (0,\epsilon_1)$ there exists a unique wave speed $c(\epsilon) = c_0 + \epsilon$ if $a_0>0$, or $c(\epsilon) = c_0 - \epsilon$ if $a_0 < 0$, and a unique (up to translations) periodic traveling wave solution to system \eqref{hypVBL} of the form 
\begin{equation}
\label{ptws}
(u,v)(x,t) = (U^\epsilon, V^\epsilon)(x - c(\epsilon)t),
\end{equation}
where the profile functions are of class $U^\epsilon, V^\epsilon \in C^3$. Moreover, the fundamental period and amplitude of the wave behave like
\begin{equation}
\label{period}
T_\epsilon = \frac{2\pi}{\omega_0} + O(\epsilon),
\end{equation}
and like
\begin{equation}
\label{amplitude}
|U^\epsilon (\xi)|, |V^\epsilon (\xi)| = O(\sqrt{\epsilon}), \qquad \text{for all } \, \xi \in \R,
\end{equation}
respectively, as $\epsilon \to 0^+$.
\end{theorem}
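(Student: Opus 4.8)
The plan is to recast the profile equations as a planar autonomous ODE system and to recognize the periodic waves claimed in Theorem~\ref{theoexist} as the limit cycles produced by a classical Andronov--Hopf bifurcation in which the wave speed $c$ is the bifurcation parameter, $\tau \in (0,\btau)$ being held fixed.

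First I would substitute the traveling-wave ansatz $(u,v)(x,t) = (U,V)(\xi)$, $\xi = x - ct$, into \eqref{hypVBL} and use the subcharacteristic condition \eqref{subchar} to solve the resulting relations for the derivatives, obtaining a planar system $(U,V)' = \Phi_c(U,V)$ with $\Phi_c \in C^3$ (since $f,g \in C^3$). Its rest points are the pairs $(U_*,V_*)$ with $g(U_*) = 0$ and $V_* = f(U_*)$; by \eqref{A2} the one relevant here is $(0, f(0))$, corresponding to the background state $u \equiv 0$, which is unstable for the reaction $\dot u = g(u)$ because $g'(0) > 0$. Autonomy of the system accounts at once for the translation invariance, hence for the ``uniqueness up to translations'' of the statement.

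Next I would examine the linearization at $(0,f(0))$. A short computation shows that the trace of the Jacobian $D\Phi_c(0,f(0))$ vanishes precisely when $c = c_0$ --- this is exactly how the critical speed \eqref{defc0} arises --- while its determinant is a positive multiple of $g'(0) > 0$ whenever $1 - c^2\tau > 0$. Since $c_0(\tau) \to f'(0)$ as $\tau \to 0^+$, one may fix $\btau \in (0,\taum)$ so small that $c_0(\tau)^2 \tau < 1$ for every $\tau \in (0,\btau)$; then $D\Phi_{c_0}(0,f(0))$ has a simple pair of purely imaginary eigenvalues $\pm i\omega_0$, with $\omega_0 > 0$ as in \eqref{defome0}. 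If $\lambda(c)$ denotes the eigenvalue branch with $\lambda(c_0) = i\omega_0$, then $\Re\lambda(c) = \tfrac{1}{2}\tr D\Phi_c(0,f(0))$ for $c$ near $c_0$, and its $c$-derivative at $c_0$ is a strictly negative multiple of $1 - \tau g'(0)$, which is positive by the positive damping condition \eqref{A3} (recall $g'(0) > 0$). Hence $\tfrac{d}{dc}\Re\lambda(c)\big|_{c=c_0} \neq 0$: the eigenvalues cross the imaginary axis transversally, and the hypotheses of the Hopf bifurcation theorem are met.

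It remains to pin down the direction of the bifurcating branch and the fine asymptotics. Putting $D\Phi_{c_0}(0,f(0))$ into real canonical form and running the standard normal-form reduction, I would compute the first Lyapunov coefficient of the bifurcation; a direct but lengthy calculation involving $f''(0)$, $f'''(0)$, $g''(0)$, $g'''(0)$ should identify it, up to a positive constant, with the quantity $a_0$ of \eqref{defa0}. When $a_0 \neq 0$, the theorem then yields, for $|c-c_0|$ small, a one-parameter family of periodic orbits emanating from $(0,f(0))$ along which the parameter behaves like $c = c_0 + \kappa\rho^2 + O(\rho^4)$, with $\rho$ the orbit amplitude and $\kappa \neq 0$ of the same sign as $a_0$; inverting this relation and setting $\epsilon := |c - c_0|$ gives the unique speed $c(\epsilon) = c_0 + \epsilon$ when $a_0 > 0$ and $c(\epsilon) = c_0 - \epsilon$ when $a_0 < 0$, together with orbits that are closed curves of diameter $O(\sqrt\epsilon)$ about $(0,f(0))$; after the harmless shift $v \mapsto v - f(0)$, which preserves the form of \eqref{hypVBL}, this is \eqref{amplitude} for both $U^\epsilon$ and $V^\epsilon$. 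The period along the branch is $2\pi/\omega_0 + O(\epsilon)$, namely \eqref{period}; since $c(\epsilon)^2\tau \to c_0^2\tau < 1$ as $\epsilon \to 0^+$, the subcharacteristic condition \eqref{subchar} holds for all small $\epsilon$; and the $C^3$ regularity of $\Phi_c$ makes the profiles of class $C^3$ (indeed $C^4$). The only genuinely nontrivial step is this last one: the explicit evaluation of the Lyapunov coefficient and its identification with $a_0$, together with the bookkeeping converting the abstract Hopf parametrization (parameter $\sim\rho^2$, amplitude $\sim\rho$) into the normalization $c(\epsilon) = c_0 \pm \epsilon$ with amplitude $O(\sqrt\epsilon)$, which relies on $a_0 \neq 0$; the reduction and the linear analysis are routine.
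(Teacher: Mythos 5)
Your approach is essentially the paper's: recast the profile equations as a planar $C^3$ system parametrized by $c$, linearize at the rest point $(0,f(0))$, observe that the trace of the Jacobian vanishes exactly at $c = c_0$ while the determinant stays positive, verify the transversality condition, and invoke the Andronov--Hopf theorem with $c$ as the bifurcation parameter and $a_0$ as the first Lyapunov coefficient.

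The one genuine gap is the step you yourself flag: you assert that the normal-form reduction ``should identify'' the first Lyapunov coefficient, up to a positive factor, with the expression $a_0$ in \eqref{defa0}, but you do not carry out that computation. The theorem \emph{defines} $a_0$ by that explicit formula, so the proof must actually verify the match (the paper does, using the non-normal-form Lyapunov coefficient formula, exploiting that $F_V$ and $G_V$ are constant so that all higher $V$-derivatives vanish and the general expression collapses to $\tfrac{1}{16}\bigl(F_{UUU} - \tfrac{1}{\omega_0}F_{UU}G_{UU}\bigr)$ evaluated at $(P_0,c_0)$). Until that is written out, the statement that ``$a_0 \neq 0$ implies genericity'' is not proved.

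Two smaller remarks. First, your construction of $\btau$ via continuity (taking $\tau$ small enough that $c_0(\tau)^2\tau < 1$) is valid but weaker than the paper's, which finds the explicit threshold $\tau_1$ as the smaller root of $\psi(\tau) = (1-\tau g'(0))^2 - \tau f'(0)^2$ and sets $\btau = \min\{\taum,\tau_1\}$; the explicit value would matter if one wanted to know how far the existence result extends. Second, when you write $c = c_0 + \kappa\rho^2 + O(\rho^4)$ with $\kappa$ of the same sign as $a_0$, you should make explicit that this uses $d_0 = \tfrac{d}{dc}\Re\lambda(c)\big|_{c_0} < 0$ (as $\kappa \sim -a_0/d_0$); you did establish $d_0 < 0$ earlier, but the sign link deserves to be stated.
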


The second theorem refers to the spectral instability of the periodic waves found in Theorem \ref{theoexist}. 

\begin{theorem}
\label{theoinst}
Under assumption \eqref{A2} there exists $0 < {\ep}_2 \leq \ep_1$ such that every small-amplitude periodic wave $(U^\ep, V^\ep)$ from Theorem \ref{theoexist} with $0 < \ep < \ep_2$ is spectrally unstable, that is, the Floquet spectrum of the linearized operator around the wave intersects the unstable half plane $\C_+ = \{ \lambda \in \C \, : \, \Re \lambda > 0\}$.
\end{theorem}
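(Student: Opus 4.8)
The plan is to follow the standard small-amplitude spectral perturbation scheme, exploiting the fact that the waves $(U^\ep,V^\ep)$ are $O(\sqrt\ep)$-close to the zero equilibrium of the reaction. First I would write down the linearization of \eqref{hypVBL} about the wave in the co-moving frame $\xi = x - c(\ep)t$. This produces a first-order system $\bu_\xi = A^\ep(\xi,\lambda)\bu$ for $\bu = (u, v)^\top$ (or an equivalent second-order scalar operator obtained from \eqref{waveeq}), with $T_\ep$-periodic coefficients depending analytically on the spectral parameter $\lambda$. The Floquet (continuous) spectrum is then characterized, via a Bloch/Floquet decomposition, as the set of $\lambda$ for which there is a nontrivial solution of the form $\bu(\xi) = e^{i\theta\xi/T_\ep}\bw(\xi)$ with $\bw$ being $T_\ep$-periodic, for some Bloch exponent $\theta \in [-\pi,\pi)$. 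Equivalently one sets up the family of operators $\cL^\ep_\theta$ acting on periodic functions, with spectrum $\ptsp(\cL^\ep_\theta)$, and $\ess$ of the linearized operator on $L^2(\R)$ is the union over $\theta$ of these point spectra.

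Next I would carry out the unperturbed computation: at $\ep = 0$ the wave degenerates to the constant state $(U,V) \equiv (0,0)$ (with $c(\ep) \to c_0$), so $\cL^0_\theta$ is a constant-coefficient operator whose spectrum is given explicitly by a dispersion relation $D_0(\lambda,\theta,\tau) = 0$ — a polynomial relation obtained by substituting $e^{i k \xi}$ ansätze, with $k$ determined by $\theta$ and the period $2\pi/\omega_0$. The key qualitative fact to extract here is that this dispersion set intersects $\C_+$: because $g'(0) > 0$, the zero equilibrium of the reaction is unstable, and this instability survives in the full hyperbolic dispersion relation under the positive damping and subcharacteristic conditions \eqref{A3}, \eqref{subchar}. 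Concretely I would exhibit a specific Bloch exponent $\theta_0$ (likely $\theta_0 = 0$, corresponding to the longest-wavelength mode, or a nearby one) and a root $\lambda_0$ with $\Re\lambda_0 > 0$ that is a simple eigenvalue of $\cL^0_{\theta_0}$, and check that it is isolated from the rest of $\sigma(\cL^0_{\theta_0})$. This mirrors the analysis already done in the parabolic case in \cite{AlPl21}.

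Then comes the perturbation argument. I would regard $\cL^\ep_{\theta_0}$ as an analytic (in $\sqrt\ep$) perturbation of $\cL^0_{\theta_0}$. The delicate point is that the perturbation involves the profile derivatives and coefficients that differ from the constant ones by terms of order $\sqrt\ep$, and in the first-order (hyperbolic) setting the perturbation is not bounded relative to the spaces involved — so I would verify that the difference $\cL^\ep_{\theta_0} - \cL^0_{\theta_0}$ is a \emph{relatively bounded} perturbation with relative bound tending to zero as $\ep \to 0^+$ (this is flagged in the introduction as a key ingredient). With relative boundedness in hand, Kato's perturbation theory (\cite{Kat80}) applies: the simple, isolated eigenvalue $\lambda_0 \in \C_+$ of $\cL^0_{\theta_0}$ persists as an analytic branch $\lambda(\ep)$ of simple eigenvalues of $\cL^\ep_{\theta_0}$ with $\lambda(\ep) \to \lambda_0$; by continuity $\Re\lambda(\ep) > 0$ for $0 < \ep < \ep_2$ with $\ep_2 \le \ep_1$ small enough. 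Since $\lambda(\ep) \in \ptsp(\cL^\ep_{\theta_0}) \subset \ess$ of the linearized operator about the wave, and $\Re\lambda(\ep) > 0$, the Floquet spectrum meets $\C_+$, which is precisely spectral instability.

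The main obstacle I anticipate is the relative boundedness step together with keeping careful track of how the $O(\sqrt\ep)$ smallness of the wave (the amplitude estimate \eqref{amplitude}) and the $O(\ep)$ correction of the period \eqref{period} and speed translate into estimates on the coefficients of $\cL^\ep_{\theta_0}$ in the appropriate periodic Sobolev spaces; one must choose the functional-analytic setting (the right $\Ldper$ vs. $\Huper$, and the domain of the operator) so that both the resolvent bound for $\cL^0_{\theta_0}$ near $\lambda_0$ and the relative-bound estimate for the perturbation hold simultaneously and uniformly. A secondary technical point is confirming the simplicity and isolation of $\lambda_0$ in $\sigma(\cL^0_{\theta_0})$ — i.e. a careful root analysis of the explicit dispersion relation — but this is a finite computation rather than a conceptual difficulty.
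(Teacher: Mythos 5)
Your proposal follows essentially the same route as the paper: Bloch/Floquet reduction to a family of point-spectral problems on periodic spaces, identification of an unstable simple eigenvalue of the constant-coefficient operator at $\ep=0$ for the Bloch exponent $\theta_0=0$ (the paper exhibits $\lambda_0=T_0\,g'(0)>0$ with a constant eigenfunction and verifies simplicity via the periodic Evans function), a relative-boundedness estimate in $\Ldper$ for the perturbation written in the $\vep=\sqrt{\ep}$ expansion, and Kato-type analytic perturbation theory (holomorphic family of type (A)) to propagate the unstable eigenvalue to $0<\ep<\ep_2$. The steps you flag as the main technical work (relative boundedness, isolation and simplicity of $\lambda_0$) are precisely Lemmas~\ref{lemuno} and~\ref{lemdos} in the paper, so your outline matches.
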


\subsection*{Plan of the paper} Section \S \ref{secex} contains the proof of existence of small-amplitude waves upon application of the classical Andronov-Hopf bifurcation theorem. In addition, we present numerical approximations of the waves for a particular example, the hyperbolic Burgers-Fisher model. Section \S \ref{secstab} is devoted to the stability problem: we define the concept of spectral stability of a periodic wave in terms of the Floquet spectrum and show the instability result based on the perturbation theory of linear operators. Some concluding remarks and discussions can be found in section \S \ref{secconcl}.

\subsection*{On notation}
Linear operators acting on infinite-dimensional spaces are indicated with calligraphic letters (e.g., $\cL$ and $\cT$). The domain of a linear operator, $\cL : X \to Y$, with $X$, $Y$ Banach spaces, is denoted as $\cD(\cL) \subseteq X$. We denote the real and imaginary parts of a complex number $\lambda \in \C$ by $\Re\lambda$ and $\Im\lambda$, respectively, as well as complex conjugation by ${\lambda}^*$. Complex transposition is indicated by the symbol $\bA^*$, whereas simple transposition is denoted by the symbol $\bA^\top$. Standard Sobolev spaces of complex-valued functions on the real line will be denoted as $L^2(\R)$ and $H^m(\R)$, with $m \in \N$, endowed with the standard inner products and norms. We denote by $\Ldper([0,\pi])$ the Hilbert space of complex $\pi$-periodic functions in $L^2_\mathrm{\tiny{loc}}(\R)$ satisfying
$u(x + \pi) = u(x)$ a.e. in $x$ and with inner product and norm
\[
\< u, v \>_{\Ldper} = \int_0^{\pi} u(x) v(x)^* \, dx, \qquad \| u \|^2_{\Ldper} = \< u, u \>_{\Ldper}.
\]
For any $m \in \N$, the periodic Sobolev space $\Hmper([0,\pi])$ will denote the set of all functions $u \in \Ldper([0,\pi])$ with all weak derivatives up to order $m$ in $\Ldper([0,\pi])$. Their standard inner product and norm are given by $\< u,v \>_{\Hmper} = \sum_{j=0}^m \< \partial_x^j u, \partial_x^j v\>_{\Ldper}$ and $\|u\|_{\Hmper}^2 = \< u,u\>_{\Hmper}$, respectively.

\section{Existence of subcharacteristic small-amplitude periodic waves}
\label{secex}

In this section we prove the existence of small-amplitude periodic traveling waves to system \eqref{hypVBL}, where $\tau \in (0, \btau)$ for some $0 < \btau < \taum$ to be determined. We focus our attention to waves traveling with speed $c \in \R$ satisfying the subcharacteristic condition \eqref{subchar}.

\subsection{Equivalent system in the plane}

Consider traveling wave solutions to \eqref{hypVBL} of the form
\begin{equation}
\label{tws}
u(x,t) = U(x -ct), \qquad v(x,t) = V(x-ct),
\end{equation}
for some profile functions $(U,V) = (U,V)(\xi)$ of the Galilean variable of translation, $\xi = x-ct$. We look for profiles which are periodic functions of its argument with some fundamental period $T > 0$ to be determined. That is, $(U,V)(\xi + T) = (U,V)(\xi)$ for all $\xi \in \R$. Substitution of \eqref{tws} into \eqref{hypVBL} yields the following family of planar systems parametrized by $c$:
\begin{equation}
\label{ODE}
\begin{aligned}
-c U_\xi + V_\xi &= g(U),\\
U_\xi - c \tau V_\xi &= f(U) - V
\end{aligned}
\end{equation}
or, equivalently,
\[
\begin{pmatrix}
-c & 1 \\ 1 & -c \tau
\end{pmatrix} \begin{pmatrix}
U_\xi \\ V_\xi 
\end{pmatrix} = \begin{pmatrix}
g(U) \\ f(U) - V 
\end{pmatrix}.
\]
Assuming that the subcharacteristic condition \eqref{subchar} holds, system \eqref{ODE} can be recast as 
\begin{equation}
\label{ODE2}
\begin{aligned}
U_\xi &= F(U,V,c,\tau),\\
V_\xi &= G(U,V,c,\tau),
\end{aligned}
\end{equation}
where,
\[
\begin{aligned}
F(U,V,c,\tau) &:= (1 - c^2 \tau)^{-1} \big( c \tau g(U) + f(U) - V\big), \\
G(U,V,c,\tau) &:= (1 - c^2 \tau)^{-1} \big( g(U) + c(f(U) - V)\big).
\end{aligned}
\]
Under the regularity assumption $f,g \in C^3(\R)$, it is clear that $F, G \in C^3$ as functions of $(U,V)$. From inspection, it is also easy to verify that the only equilibrium points in the $(U,V)$-plane of system \eqref{ODE2} are $P_0 = (0, f(0))$ and $P_1 = (1, f(1))$. Let us compute the linearization at an equilibrium point. The Jacobian matrix of system \eqref{ODE2} is given by
\[
\bJ(U,V,c,\tau) = \begin{pmatrix}
F_U & F_V \\ G_U & G_V
\end{pmatrix} = (1 - c^2 \tau)^{-1} \begin{pmatrix}
c \tau g'(U) + f'(U) & -1 \\ g'(U) + c f'(U) & -c
\end{pmatrix}.
\]

We now pay attention to the equilibrium point $P_0 = (0,f(0))$, corresponding to the unstable equilibrium of the reaction ($u = 0$ with $g(0) = 0$, $g'(0) > 0$) and the value of the nonlinear flux at that point ($v = f(0)$). The linearization of \eqref{ODE2} at $P_0$ is given by
\[
\bJ_0 := \bJ(0,f(0), c,\tau) = (1 - c^2 \tau)^{-1} \begin{pmatrix}
c \tau g'(0) + f'(0) & -1 \\ g'(0) + c f'(0) & -c
\end{pmatrix}.
\]
The eigenvalues of $\bJ_0$ are the $\zeta$-roots of
\[
\det \begin{pmatrix}
c \tau g'(0) + f'(0)-\zeta & -1 \\ g'(0) + c f'(0) & -c - \zeta
\end{pmatrix} = \zeta^2 + (c - f'(0) - c \tau g'(0)) \zeta + (1-c^2\tau) g'(0) = 0,
\]
namely,
\[
\zeta_0^\pm(c,\tau) := \tfrac{1}{2} \big(f'(0) + c\tau g'(0) - c\big) \, \pm \, \tfrac{1}{2} \Big( (f'(0) + c\tau g'(0) - c)^2 - 4 (1-c^2\tau) g'(0) \Big)^{1/2}.
\]

Notice that, in view of the subcharacteristic condition \eqref{subchar} and of assumption \eqref{A2}, the eigenvalues are purely imaginary when they are evaluated at the following critical value of the speed \eqref{defc0},
\[
c_0 = \frac{f'(0)}{1 - \tau g'(0)}.
\]
Here $1 - \tau g'(0) > 0$ because of the positive damping condition \eqref{A3}. To ensure that $c_0$ is a subcharacteristic speed we further restrict the parameter domain for $\tau$. Clearly, $c_0^2 < 1/\tau$ if and only if
\[
\psi(\tau) := (1-\tau g'(0))^2 - \tau f'(0)^2 > 0.
\]
Notice that $\psi(0) > 0$. It takes a straightforward calculation to verify that, under our assumptions, the roots of this second order polynomial in $\tau$ are both real and positive. The smallest of these roots is
\[
\tau_1 := \frac{1}{2g'(0)^2} \left( f'(0)^2 + 2 g'(0) - \sqrt{(f'(0)^2 + 2 g'(0))^2 - 4g'(0)^2} \right) > 0.
\]
Therefore, we define
\begin{equation}
\label{defbtau}
\btau := \min \left\{ \taum, \tau_1 \right\} > 0,
\end{equation}
and we specialize the analysis to parameter values $\tau \in (0,\btau)$, ensuring both the positive damping, \eqref{A3}, and subcharacteristic, \eqref{subchar}, conditions.


The existence of small-amplitude periodic traveling waves for systems of the form \eqref{hypVBL} is a direct consequence of the classical Andronov-Hopf's theorem in the plane \cite{Andro29,Hop42} (for the precise statement that we apply here, see Theorem 2.1 in \cite{AlPl21}; the reader is also referred to its different versions in \cite{GuHo83,HaKo91,Kuz982e}).

Let us gather all the necessary ingredients prior to the application of this classical local bifurcation result. First, notice that for each fixed $\tau \in (0,\btau)$ the eigenvalues of $\bJ_0$ can be written as
\[
\zeta_0^\pm (c) = \alpha(c) \pm i \beta(c),
\]
where
\[
\begin{aligned}
\alpha(c) &:= \frac{1}{2} \big( f'(0) + c( \tau g'(0) - 1) \big),\\
\beta(c) &:= \frac{1}{2} \sqrt{4 (1-c^2 \tau) g'(0) - (f'(0) + c\tau g'(0) - c)^2 \, },
\end{aligned}
\]
are both defined for $c \approx c_0$. Note that $\beta(c) \in \R$ for $c$ near $c_0$ because $g'(0) > 0$. Observe as well that $\alpha(c_0) = 0$ and the point $P_0 = (0, f(0))$ is a center for system \eqref{ODE2} when $c = c_0$, with eigenvalues $\zeta_0^\pm(c_0) = \pm i \beta(c_0)$. Hence, the bifurcation parameter is the speed $c$ and the critical value for which a bifurcation occurs is $c = c_0$.  Let us now define
\[
\omega_0 := \beta(c_0) = \sqrt{(1-c_0^2\tau) g'(0)} \, > 0,
\]
and compute
\[
G_U(0,f(0),c_0,\tau) = (1 - c_0^2 \tau)^{-1} \big( g'(0) + c_0 f'(0) \big) =  (1 - c_0^2 \tau)^{-1} \Big( g'(0) + \frac{f'(0)^2}{1 - \tau g'(0)} \Big) > 0,
\]
for all $\tau \in (0, \btau)$ in view of $1 > \tau g'(0)$, $g'(0) > 0$ and $1 > c_0^2 \tau$. Therefore,
\begin{equation}
\label{samesign}
\sgn (\omega_0) = \sgn ({G_U}_{|(P_0,c_0)}) = 1.
\end{equation}
Moreover, it is clear that
\begin{equation}
\label{defd0}
d_0 : = \frac{d\alpha}{dc}(c_0) = \tfrac{1}{2}(\tau g'(0) -1) < 0, \qquad \forall \, \tau \in (0,\btau). 
\end{equation}

Finally, let us recall the expression for the first Lyapunov coefficient in the case when a planar system, like \eqref{ODE2}, is not in normal form (see, for instance, \cite{GuHo83}, p. 152, or \cite{Verdu05}, sect. 2):
\begin{equation}
\label{formlyapexp}
\begin{aligned}
 a_0 &:= \frac{1}{16} \big( F_{UUU} + F_{UVV} + G_{UUV} + G_{VVV}\big) +\\ &\; \; + \frac{1}{16 \omega_0} 
\big((F_{UU} + F_{VV})F_{UV}  - (G_{UU} + G_{VV})G_{UV} - F_{UU}G_{UU} + F_{VV}G_{VV}\big).
\end{aligned}
\end{equation}
Here the partial derivatives of $F$ and $G$ are evaluated at the equilibrium point $P_0$ and at the critical value $c = c_0$. Since $F_V = - (1-c^2 \tau)^{-1}$ and $G_V = - c (1-c^2 \tau)^{-1}$ are constant, all the higher order derivatives with respect to $V$ vanish and expression \eqref{formlyapexp} reduces to
\[
a_0 = \frac{1}{16} \left( F_{UUU} - \frac{1}{\omega_0} F_{UU} G_{UU}\right)_{|(P_0,c_0)}.
\]
Upon computation of the derivatives,
\[
\begin{aligned}
(F_{UU})_{|(P_0,c_0)} &= (1 - c_0^2 \tau)^{-1} (c_0 \tau g''(0) + f''(0)),\\
(F_{UUU})_{|(P_0,c_0)} &= (1 - c_0^2 \tau)^{-1} (c_0 \tau g'''(0) + f'''(0)),\\
(G_{UU})_{|(P_0,c_0)} &= (1 - c_0^2 \tau)^{-1} (g''(0) + c_0 f''(0)),
\end{aligned}
\]
we obtain the expression,
\[
a_0 = \frac{(1-c_0^2\tau)^{-1}}{16} \left[ c_0 \tau g'''(0) + f'''(0) - \frac{1}{\omega_0} \big(c_0 \tau g''(0) + f''(0)\big) \big(g''(0) + c_0 f''(0)\big)\right],
\]
yielding \eqref{defa0} for each $\tau \in (0, \btau)$. We now have all the elements to prove the main existence theorem.

\subsection{Proof of Theorem \ref{theoexist}}

Follows from the verification of the hypotheses of Andronov-Hopf's theorem. First, note that the non-hyperbolicity condition is satisfied because eigenvalues of $\bJ_0$ are purely imaginary, $\zeta_0^\pm(c_0) = \pm i \beta(c_0)$; $c = c_0$ is the critical value where the bifurcation occurs, and \eqref{samesign} holds. The transversality condition is also satisfied in view of \eqref{defd0}. Finally, by hypothesis, the first Lyapunov coefficient is different from zero, $a_0 \neq 0$, and the genericity condition is also verified.

Therefore, we apply Andronov-Hopf's bifurcation theorem to system \eqref{ODE2} with fixed $\tau \in (0,\btau)$, where $\btau$ is defined in \eqref{defbtau} and where $c$ plays the role of the bifurcation parameter, to conclude the existence of $\epsilon_0 > 0$ such that a unique family of periodic orbits bifurcate from $P_0$ into the region $c \in (c_0, c_0 + \epsilon_0)$ if $a_0 d_0 < 0$, or into the region $c \in (c_0 - \epsilon_0, c_0)$ if $a_0 d_0 > 0$. In view of \eqref{defd0}, $d_0 < 0$ and the equilibrium point $P_0$ is stable for $c > c_0$ and unstable for $c < c_0$. Hence, these periodic orbits are unstable\footnote{see Remark \ref{remstab} below.} for $c > c_0$ (subcritical Hopf bifurcation) and stable for $c < c_0$ (supercritical Hopf bifurcation). The amplitude of these periodic orbits grows like $O(\sqrt{|c-c_0|})$ and their periods satisfy $T = 2\pi/|\omega_0| + O(|c-c_0|)$ (see Theorem 3.1, p. 65 in \cite{MaMcC76}, or section 8.2 in \cite{Strg15}). Notice that $\tau \in (0,\btau)$ and therefore the critical wave speed $c_0$ in \eqref{defc0} is subcharacteristic. By continuity, there exists $\widetilde{\epsilon} > 0$ such that if $|c-c_0| < \widetilde{\epsilon}$ then $c$ remains subcharacteristic, that is, $c^2 \tau < 1$. Thus, we select $\epsilon_1 := \min \{\epsilon_0, \widetilde{\epsilon}\} > 0$. 

Now, since the wave speeds, 
\begin{equation}
\label{defce}
c(\ep) := \begin{cases}
c_0 + \epsilon, & \text{if } a_0 > 0,\\
c_0 - \epsilon, & \text{if } a_0 < 0,
\end{cases} \qquad \epsilon \in [0,\epsilon_1)
\end{equation}
are subcharacteristic with $c(\epsilon)^2 \tau < 1$, then the nonlinear system \eqref{ODE2} is equivalent to the original system \eqref{ODE} and the orbits define periodic traveling wave solutions to the hyperbolic model \eqref{hypVBL} of the form \eqref{tws}, whose fundamental period and amplitude satisfy \eqref{period} and \eqref{amplitude}. The theorem is proved.
\qed

\begin{remark}
\label{remstab}
The notion of a “stable'' periodic orbit that we mention in the proof of Theorem \ref{theoexist} refers to the standard concept from dynamical systems theory: the orbit is stable as a solution to system \eqref{ODE2} for a specific (and constant) value of $c$ if any other nearby solution (to the system with the same $c$) tends to the orbit under consideration. This notion is completely unrelated to the concept of \emph{spectrally stable periodic wave} from Definition \ref{defspectstab} below, which refers to the dynamical stability of the traveling wave as a solution to the evolution system of PDEs. In \cite{AlPl21}, section 6.3, the reader can find an example of a periodic wave which is stable as a periodic orbit of the associated dynamical system but spectrally unstable as solution to the PDE.
\end{remark}

\subsection{Example: the hyperbolic Burgers-Fisher model}

Let us examine the case of the hyperbolic Burgers-Fisher model, namely,
\begin{equation}
\label{hypBF}
\begin{aligned}
u_t + v_x &= u(1-u),\\
\tau v_t + u_x &= \tfrac{1}{2}u^2 -v.
\end{aligned}
\end{equation}
for which the reaction and nonlinear advection flux are given by the classical logistic parabolic profile (also known as Fisher-KPP reaction\cite{Fis37,KPP37}),
\begin{equation}
\label{Fisher}
g(u) = u(1-u).
\end{equation}
and by the Burgers' flux function \cite{Bur48,La57}, 
\begin{equation}
\label{Burgers}
f(u) = \tfrac{1}{2}u^2,
\end{equation}
respectively. The Burgers' flux plays a significant role in the theory of scalar conservation laws (cf. \cite{Da4e,La73}) as it is the paradigm of a convex (genuinely nonlinear) mode. The logistic (concave) parabolic profile is a classical choice for a source term satisfying \eqref{A2} and goes back to the work of Verhulst \cite{Verh1838} in population dynamics. Hence, the hyperbolic Burgers-Fisher system \eqref{hypBF} is perhaps the simplest example of a hyperbolic reaction-diffusion-advection model of the from \eqref{hypVBL}.

In this case it is clear that, for all $u \in \R$,
\[
\begin{aligned}
f'(u) &= u, &\quad f''(u) &= 1, &\quad f'''(u) &= 0,\\
g'(u) &= 1-2u, &\quad g''(u) &= -2, &\quad g'''(u) &=0,
\end{aligned}
\]
so that, upon substitution,
\[
c_0 = \frac{f'(0)}{1- \tau g'(0)} = 0, \quad \omega_0 = \sqrt{(1-c_0^2 \tau)g'(0)}  = 1, \quad a_0 = - \frac{1}{16} f''(0) g''(0) = \frac{1}{8} > 0. 
\]
Moreover, it is easy to verify that $\taum = \btau = 1$ and the hypotheses of Theorem \ref{theoexist} are satisfied. Thus, we have the following 
\begin{proposition}
Fix $\tau \in (0,1)$. Hence there exists $\epsilon_1 > 0$ sufficiently small (depending on $\tau$) such that for each $c(\epsilon) := \epsilon > 0$, $0 < \epsilon < \epsilon_1$, there is a unique (up to translations) periodic traveling wave solution to the hyperbolic Burgers-Fisher model \eqref{hypBF} of the form \eqref{tws}, traveling with speed $c(\epsilon)$, with amplitude of order $O(\sqrt{\epsilon})$ and fundamental period of order $T = 2\pi + O(\epsilon)$ as $\epsilon \to 0^+$.
\end{proposition}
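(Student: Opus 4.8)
The plan is to obtain this statement as a direct application of Theorem~\ref{theoexist} to the special case $f(u) = \tfrac12 u^2$, $g(u) = u(1-u)$, so the entire argument reduces to verifying the hypotheses of that theorem and reading off the resulting constants. First I would note that $f, g \in C^3(\R)$ (both are polynomials) and that $g$ satisfies the logistic conditions \eqref{A2}: one has $g(0) = g(1) = 0$, $g'(0) = 1 > 0$, $g'(1) = -1 < 0$, $g(u) = u(1-u) > 0$ on $(0,1)$, and $g(u) = u(1-u) < 0$ on $(-\delta,0)$ for any $\delta > 0$ (indeed for every $u < 0$). Hence \eqref{A2} holds; moreover, since $g'(u) = 1 - 2u$ is decreasing, $\sup_{u\in(-\delta,1)}|g'(u)| = 1 + 2\delta$, so $\taum = (1+2\delta)^{-1}$, which can be made as close to $1$ as desired by shrinking $\delta$. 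Thus for any prescribed $\tau \in (0,1)$ one fixes $\delta \in (0,(\tfrac1\tau - 1)/2)$, which guarantees $\tau < \taum$ and hence the positive damping condition \eqref{A3}.

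Next I would compute the bifurcation data. From $f'(0) = 0$, $g'(0) = 1$, the polynomial $\psi(\tau) = (1-\tau g'(0))^2 - \tau f'(0)^2 = (1-\tau)^2$ is positive for all $\tau \neq 1$, with smallest root $\tau_1 = 1$; combined with the above this gives $\btau = \min\{\taum,\tau_1\}$, so (in the limit $\delta \to 0^+$) the admissible range is exactly $\tau \in (0,1) = (0,\btau)$. Then \eqref{defc0} yields the critical speed $c_0 = f'(0)/(1 - \tau g'(0)) = 0$, which is trivially subcharacteristic since $c_0^2\tau = 0 < 1$; formula \eqref{defome0} gives $\omega_0 = \sqrt{(1 - c_0^2\tau)g'(0)} = 1$; and, inserting $c_0 = 0$, $f''(0) = 1$, $g''(0) = -2$, $f'''(0) = g'''(0) = 0$ into \eqref{defa0}, the expression collapses to $a_0 = -\tfrac{1}{16} f''(0) g''(0) = \tfrac18 \neq 0$, so the genericity hypothesis holds.

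Finally I would invoke Theorem~\ref{theoexist}: since $a_0 = \tfrac18 > 0$ we are in the case $c(\epsilon) = c_0 + \epsilon = \epsilon$, and the theorem produces, for $0 < \epsilon < \epsilon_1$, a unique (up to translation) periodic traveling wave $(U^\epsilon, V^\epsilon)(x - \epsilon t)$ of \eqref{hypBF} with fundamental period $T_\epsilon = 2\pi/\omega_0 + O(\epsilon) = 2\pi + O(\epsilon)$ and amplitude $|U^\epsilon|, |V^\epsilon| = O(\sqrt{\epsilon})$ as $\epsilon \to 0^+$, which is the assertion. There is essentially no obstacle in the argument; the only mild subtlety worth flagging is that $\taum$ (hence $\btau$) depends on the choice of $\delta$ in \eqref{A2}, but this is harmless since $\delta > 0$ is free and $\taum \uparrow 1$ while $\tau_1 = 1$, so the range $\tau \in (0,1)$ claimed in the statement is precisely the one for which all hypotheses of Theorem~\ref{theoexist} are met.
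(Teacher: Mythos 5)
Your proposal is correct and follows essentially the same route as the paper: verify that $f,g$ satisfy the hypotheses of Theorem~\ref{theoexist}, compute $c_0=0$, $\omega_0=1$, $a_0=\tfrac18>0$, and $\btau=1$, then read off the conclusion. The only (minor) difference is that you carefully note $\taum=(1+2\delta)^{-1}$ depends on the free parameter $\delta$ in \eqref{A2} and argue that $\delta$ can be shrunk to accommodate any $\tau\in(0,1)$, whereas the paper simply asserts $\taum=\btau=1$; your version is slightly more precise but the substance is identical.
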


Figures \ref{figT02BF} and \ref{figT09BF} illustrate the emergence of small amplitude periodic traveling waves from a Hopf bifurcation for the hyperbolic Burgers-Fisher system. Both figures are represented in the same scale for comparison purposes. 

\begin{figure}[h]
\begin{center}
\subfigure[$c = -0.01$]{\label{figT02BFneg}\includegraphics[scale=.45, clip=true]{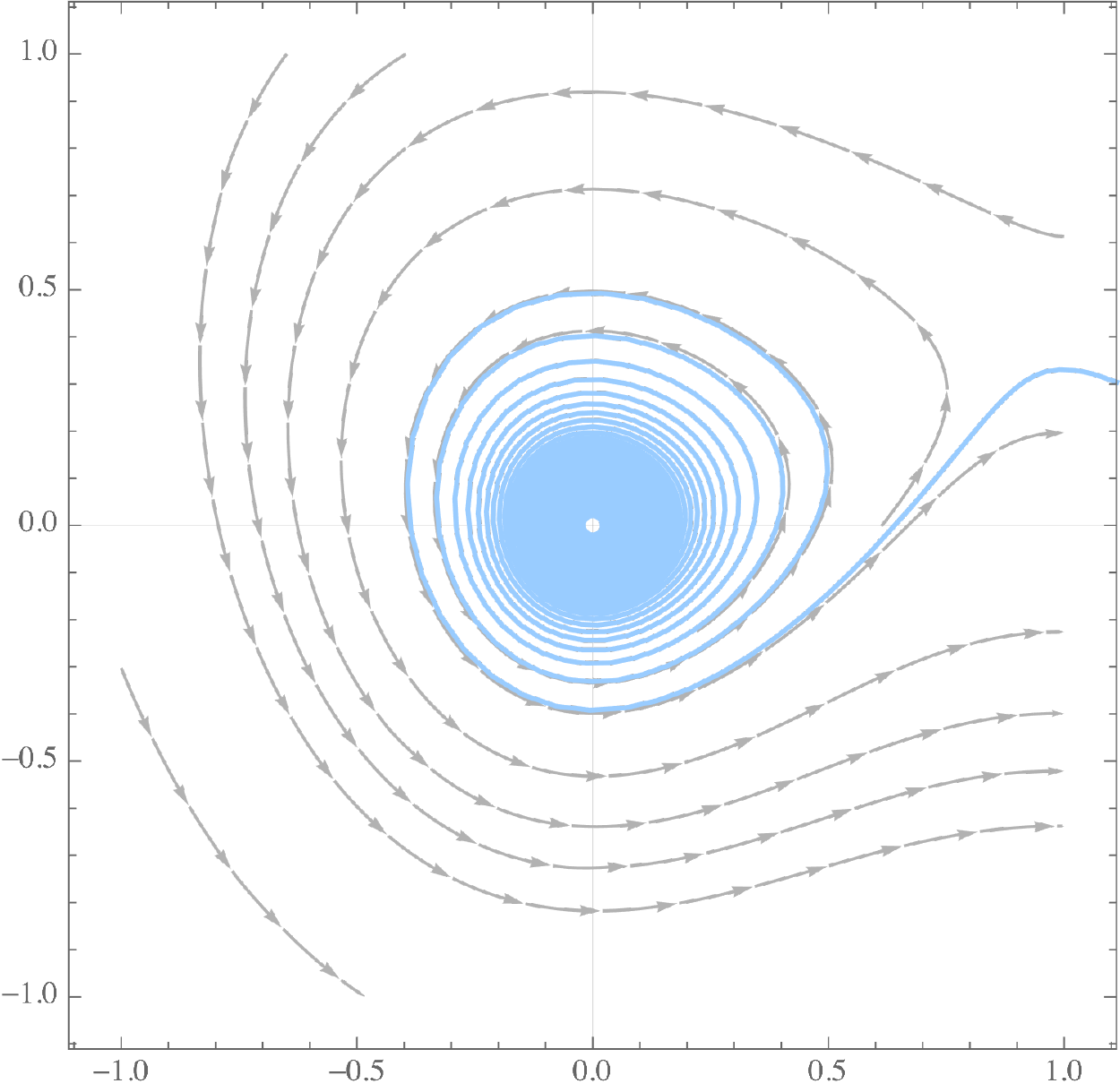}}
\subfigure[$c=0$]{\label{figT02BF0}\includegraphics[scale=.45, clip=true]{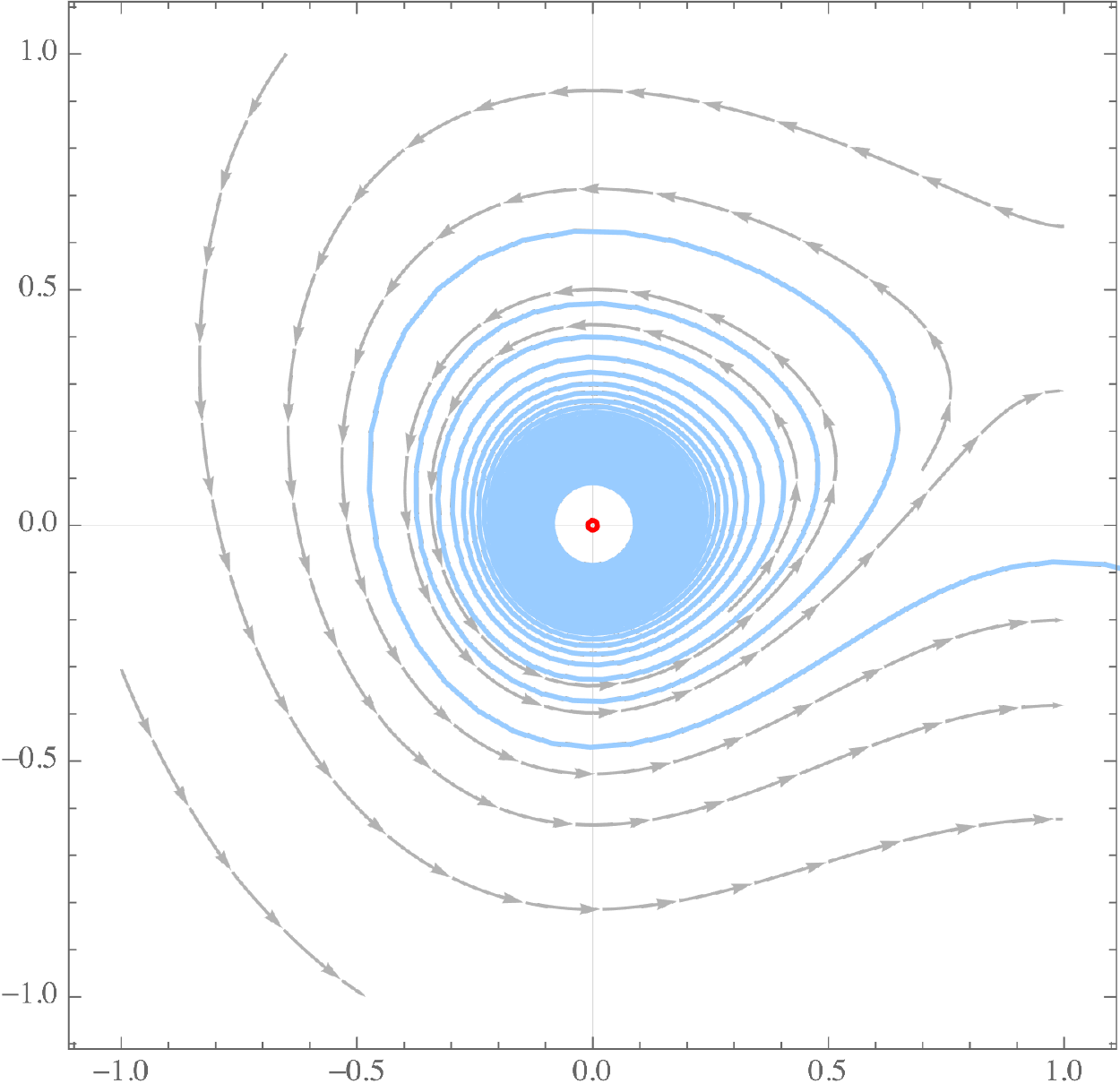}}
\subfigure[$c= 0.01$]{\label{figT02BFpos}\includegraphics[scale=.45, clip=true]{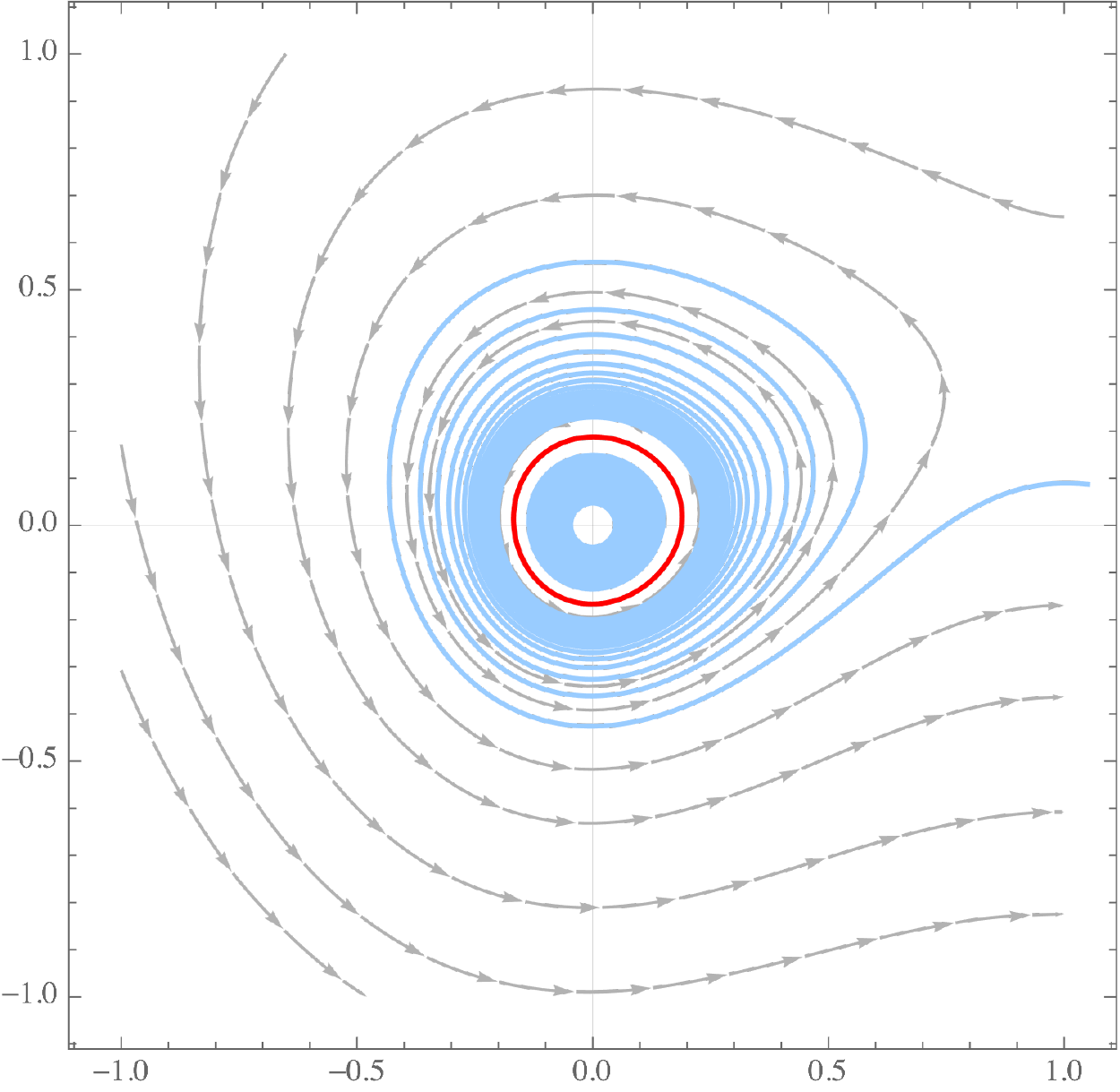}}
\subfigure[$U = U(\xi)$]{\label{figT02BFwave}\includegraphics[scale=.45, clip=true]{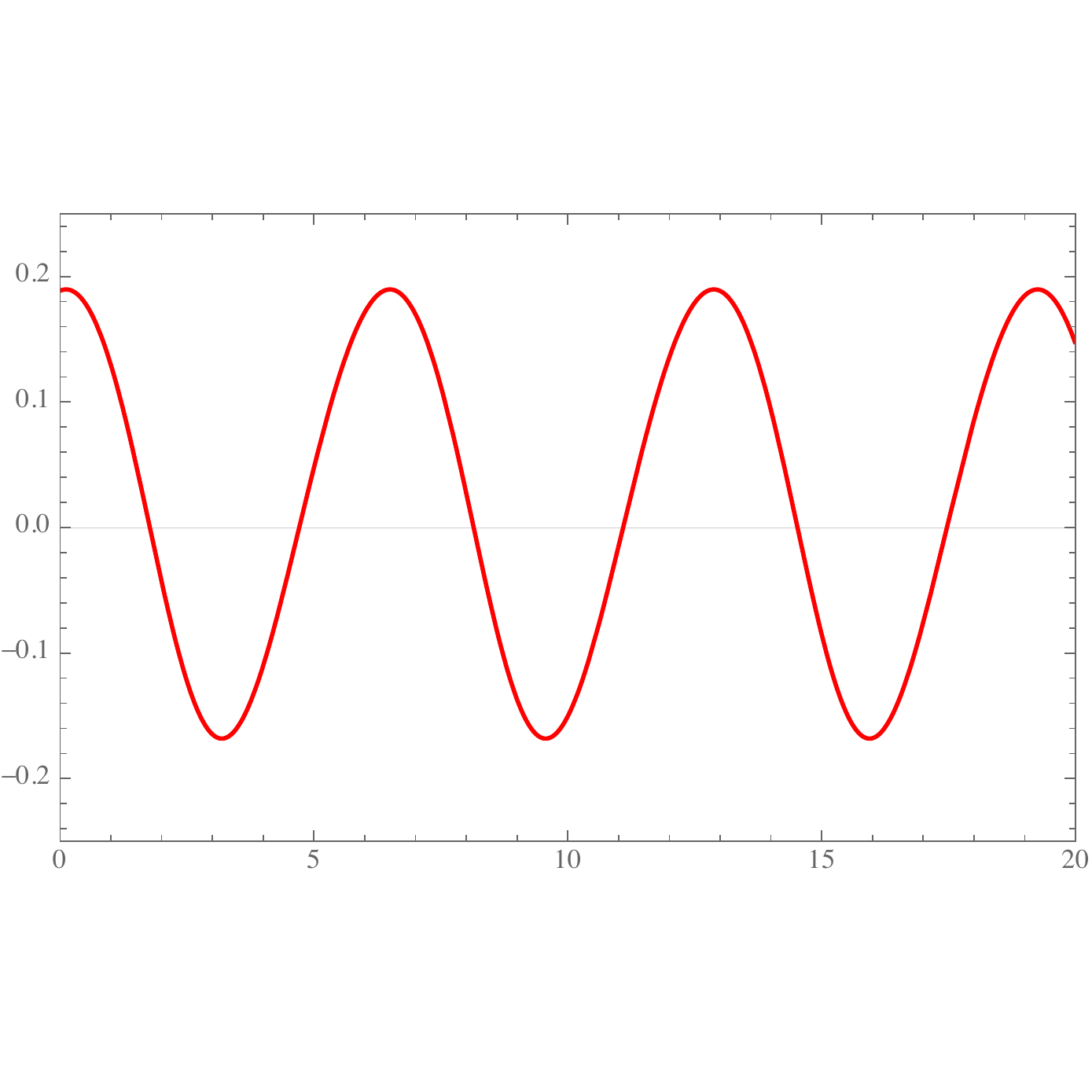}}
\end{center}
\caption{\small{Emergence of small-amplitude periodic waves for the hyperbolic Burgers-Fisher model \eqref{hypBF} with relaxation time $\tau = 0.2$. Panel (a) shows the phase portrait in the $(U,V)$ plane of system \eqref{ODE2} for the speed value $c = -0.01$. Numerical solutions are shown in light blue color. Panel (b) shows the case when $c = c_0 = 0$, the parameter value where a subcritical Hopf bifurcation occurs. Panel (c) shows the case where $c = 0.01$: the orbit in red is a numerical approximation of the unique small amplitude periodic wave for this speed value. Panel (d) shows the graph (in red) of the $U$-component of the approximated periodic wave (vertical axis) as a function of the Galilean variable $\xi = x -ct$ (horizontal axis) (color online).}}\label{figT02BF}
\end{figure}

\begin{figure}[h]
\begin{center}
\subfigure[$c = -0.01$]{\label{figT09BFneg}\includegraphics[scale=.45, clip=true]{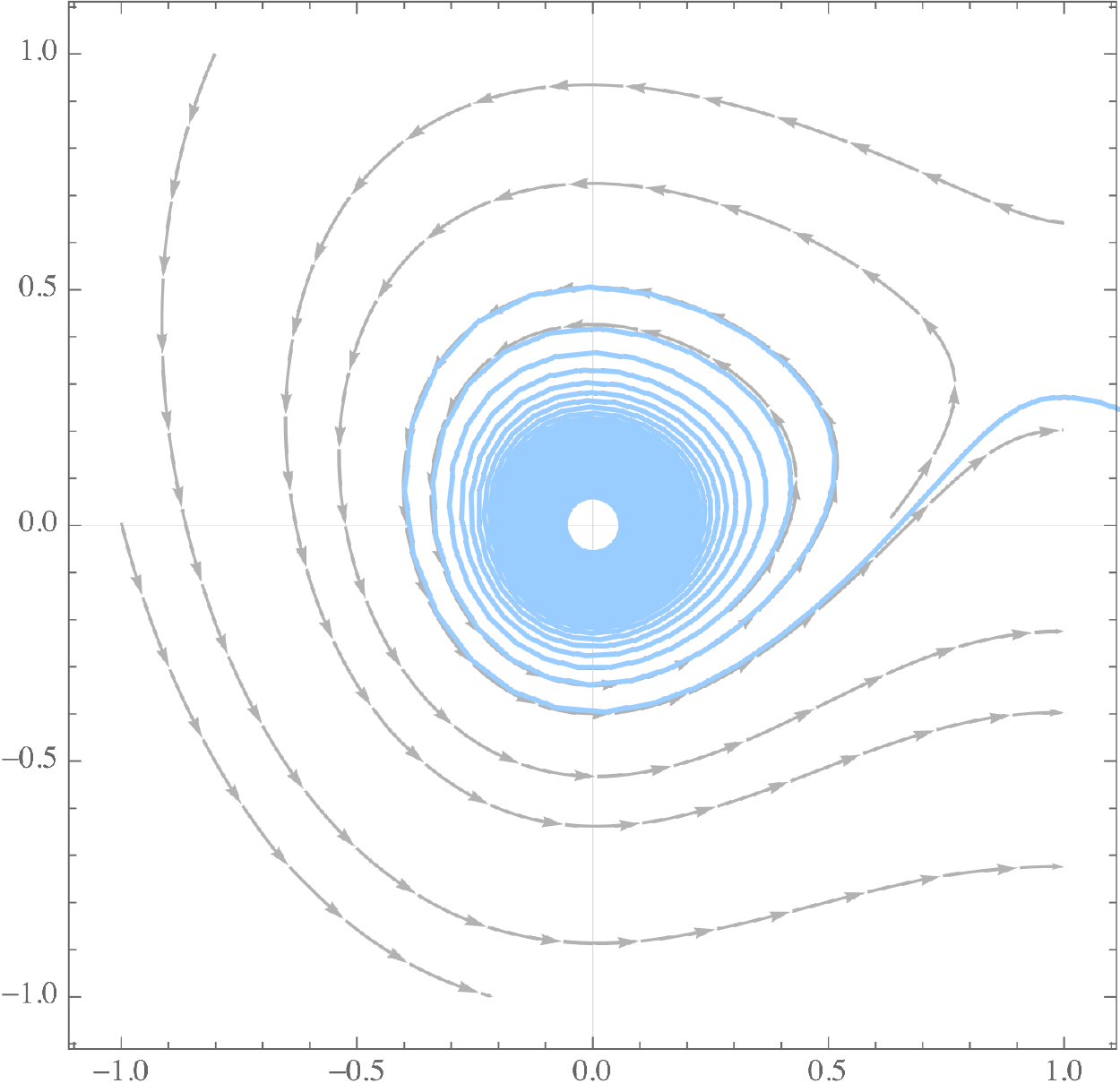}}
\subfigure[$c=0$]{\label{figT09BF0}\includegraphics[scale=.45, clip=true]{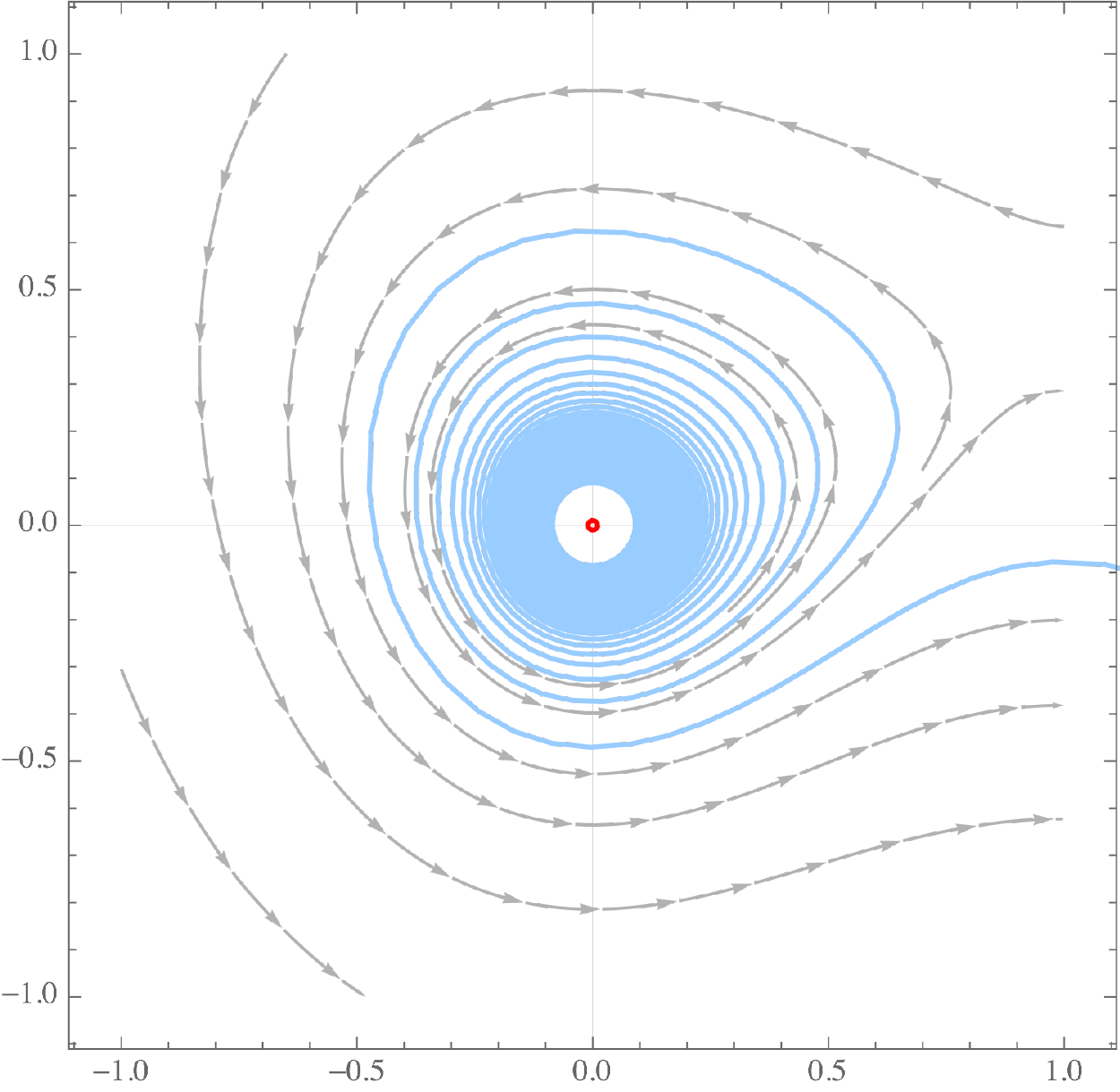}}
\subfigure[$c= 0.01$]{\label{figT09BFpos}\includegraphics[scale=.45, clip=true]{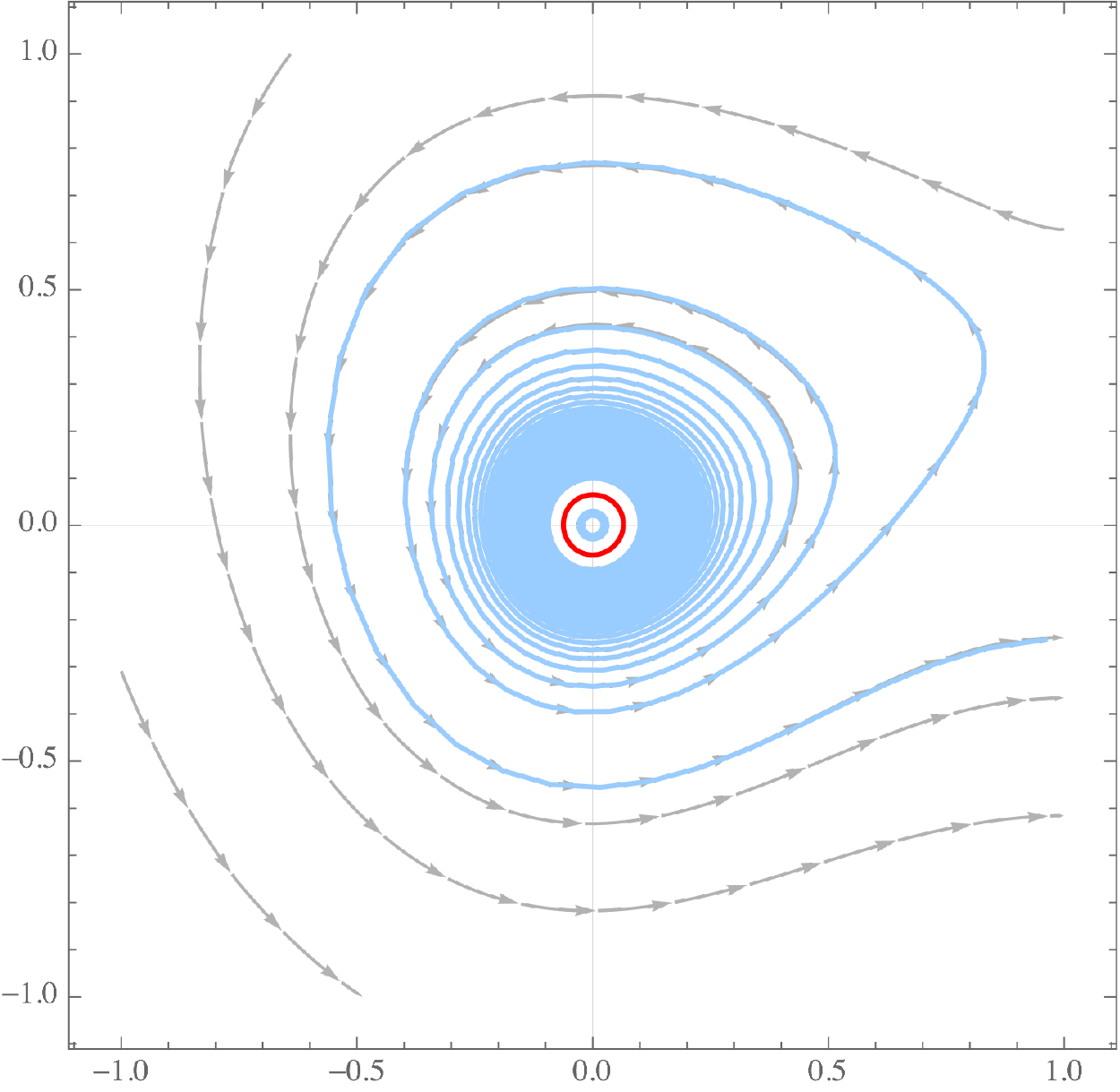}}
\subfigure[$U = U(\xi)$]{\label{figT09BFwave}\includegraphics[scale=.45, clip=true]{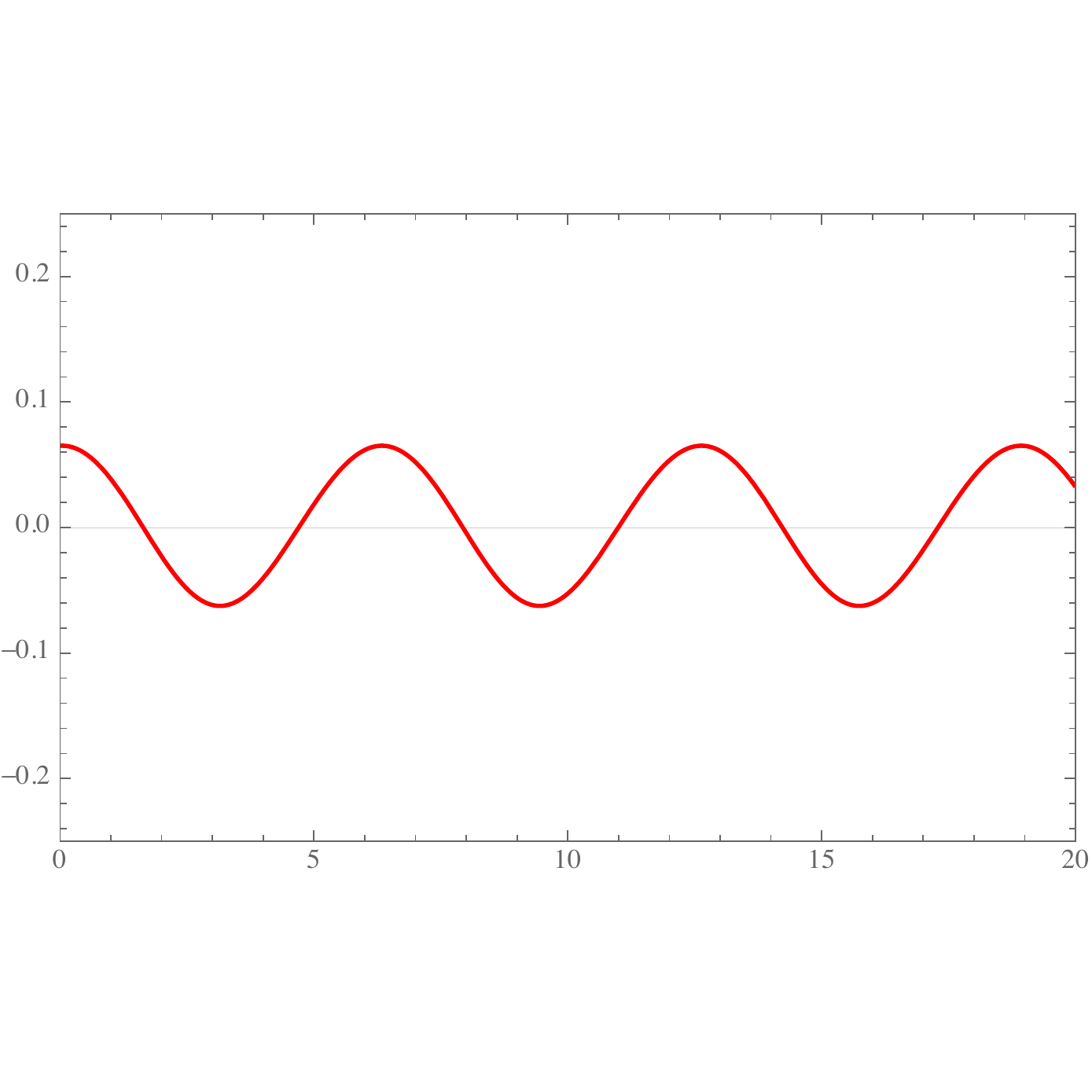}}
\end{center}
\caption{\small{Emergence of small-amplitude periodic waves for the hyperbolic Burgers-Fisher model \eqref{hypBF} with relaxation time $\tau = 0.9$. Panel (a) shows the phase portrait in the $(U,V)$ plane of system \eqref{ODE2} for the speed value $c = -0.01$. Numerical solutions are shown in light blue color. Panel (b) shows the case when $c = c_0 = 0$, the parameter value where a subcritical Hopf bifurcation occurs. Panel (c) shows the case where $c = 0.01$: the orbit in red is a numerical approximation of the unique small amplitude periodic wave for this speed value. Panel (d) shows the graph (in red) of the $U$-component of the approximated periodic wave (vertical axis) as a function of the Galilean variable $\xi = x -ct$ (horizontal axis) (color online).}}\label{figT09BF}
\end{figure}

In Figure \ref{figT02BF} we consider system \eqref{hypBF} for the parameter value $\tau = 0.2$. Figure \ref{figT02BFneg} shows the phase portrait of system \eqref{ODE2} with speed value $c = -0.01$ (that is, below the critical speed): the origin is a repulsive node and all nearby solutions (in light blue; color online) move away from it. Figure \ref{figT02BF0} shows the phase portrait of \eqref{ODE2} for the critical value $c = c_0 = 0$, where the subcritical Hopf bifurcation occurs. Figure \ref{figT02BFpos} depicts the phase portrait for the speed value $c = 0.01$. The closed orbit in red color is a numerical approximation of the (unique) small-amplitude periodic orbit for this value of the speed: the origin is now an attractive node and all nearby solutions inside the periodic orbit approach zero, whereas solutions outside move away from it. Panel \ref{figT02BFwave} shows the periodic profile, $U = U(\cdot)$ (in red), as a function of the Galilean variable $\xi = x-ct$, computed numerically for $c = 0.01$. Its fundamental period is approximately $T \approx 2\pi$, according to formula \eqref{period}.

Figure \ref{figT09BF} shows the emergence of small-amplitude periodic waves from a Hopf bifurcation in the same fashion as in Figure \ref{figT02BF}, but for the relaxation time $\tau = 0.9$. Notice that the amplitude of the waves seems to decrease as $\tau$ increases. We do not know whether this is a general behavior. The existence of periodic orbits in the limit when $\tau$ tends to the threshold value $\btau$, where the system becomes singular as it approaches a characteristic speed, or even beyond the subcharacteristic regime, is a question that deserves further investigations.

\section{Spectral instability}

\label{secstab}

In this section we prove that the small-amplitude periodic waves emerging from a Hopf bifurcation are spectrally unstable, that is, that the Floquet spectrum of the linearized operator around the periodic wave intersects the unstable half plane of complex numbers with positive real part.

\subsection{Perturbation equations and the stability problem}

According to Theorem \ref{theoexist}, let us fix a value $\tau \in (0, \btau)$ and consider the periodic traveling wave solution to \eqref{hypVBL}, $(u,v)(x,t) = (U^\ep, V^\ep)(x-c(\ep)t)$, with speed given by \eqref{defce} for each $\ep \in (0, \ep_1)$. With a slight abuse of notation let us rescale the space variable as $x \mapsto x - c(\ep)t$ in order to transform system \eqref{hypVBL} into
\begin{equation}
\label{hVBLp}
\begin{aligned}
-cu_x + u_t + v_x &= g(u),\\
-c \tau v_x + \tau v_t + u_x &= f(u)-v,
\end{aligned}
\end{equation} 
for which the periodic waves constitute stationary solutions, $(U^\ep, V^\ep) = (U^\ep, V^\ep)(x)$. Consider a solution to \eqref{hVBLp} of the form $(U^\ep,V^\ep)(x) + (\tiu,\tiv)(x,t)$ where $\tiu$ and $\tiv$ denote perturbations of the wave.  Upon substitution into \eqref{hVBLp} and linearization around the wave, we obtain the following linear system for the perturbation variables,
\begin{equation}
\label{lin}
\begin{aligned}
\tiu_t &= c \tiu_x - \tiv_x + g'(U^\ep) \tiu,\\
\tau \tiv_t &= c\tau \tiv_x - \tiu_x - \tiv + f'(U^\ep) \tiu.
\end{aligned}
\end{equation}

If we consider perturbations of the form $(\tiu,\tiv) = e^{\lambda t} (u,v)$, where $\lambda \in \C$ and $(u,v)^\top \in L^2(\R) \times L^2(\R)$ we arrive at the following spectral problem
\begin{equation}
\label{spectp}
\lambda \bw = \cL^\ep \bw,
\end{equation}
where $\bw = (u,v)^\top$ and
\[
\cL^\ep : \cD(\cL^\ep) \subset L^2(\R) \times L^2(\R) \rightarrow L^2(\R) \times L^2(\R),
\]
is a closed, densely defined operator with domain $\cD(\cL^\ep) = H^1(\R) \times H^1(\R)$ and determined by
\begin{equation}
\cL^\ep := \bB^{-1} \big( \bA \partial_x + \bC(x) \big),
\end{equation}
where
\[
\bA := \begin{pmatrix} c & -1 \\ -1 & c\tau
\end{pmatrix}, \qquad \bB := \begin{pmatrix}1 & 0 \\ 0 & \tau
\end{pmatrix}, \qquad \bC(x) := \begin{pmatrix} b(x) & 0 \\ a(x) & -1
\end{pmatrix}, 
\]
and,
\[
a(x) := f'(U^\ep(x)), \qquad b(x) := g'(U^\ep(x)). 
\]
Notice that since the profile function $(U^\ep, V^\ep)$ is periodic with period $T_\ep$, the coefficients of the operator $\cL^\ep$ are periodic with fundamental period $T_\ep$.

The $L^2$-resolvent set of $\cL^\ep$, denoted as $\rho(\cL^\ep)$, is defined as the set of complex numbers $\lambda \in \C$ such that $\cL^\ep - \lambda$ is invertible and $(\cL^\ep - \lambda)^{-1}$ is a bounded operator. The complement of $\rho(\cL^\ep)$ is what we call the $L^2$-spectrum of $\cL^\ep$ and we denote it as $\sigma(\cL^\ep) = \C \backslash \rho(\cL^\ep)$.

\begin{definition}[spectral stability]
\label{defspectstab} 
We say that the periodic traveling wave solution, $(U^\ep,V^\ep)$, to system \eqref{hypVBL} is \emph{spectrally stable} if the $L^2$-spectrum of the linearized operator around the wave satisfies
\[
\sigma(\cL^\ep) \subset \{\lambda \in \C \, : \, \Re \lambda \leq 0\}.
\]
Otherwise, we say that it is \emph{spectrally unstable}.
\end{definition}

\begin{remark}
Any $\lambda \in \sigma(\cL^\ep)$ is said to belong to the point spectrum, $\ptsp(\cL^\ep)$, if $\cL^\ep - \lambda$ is a Fredholm operator with index equal to zero and non-trivial kernel. Also, $\lambda$ belongs to the essential (or \emph{continuous}, in the periodic case) spectrum, $\ess(\cL^\ep)$, provided that either $\cL^\ep - \lambda$ is not Fredholm, or it is Fredholm with non-zero index. Clearly, both the point and essential spectra are subsets of $\sigma(\cL^\ep)$. Moreover, since the operator is closed, $\sigma(\cL^\ep) = \ptsp(\cL^\ep) \cup \ess(\cL^\ep)$ (see \cite{Kat80}, p. 167). The point spectrum comprises isolated eigenvalues with finite (algebraic) multiplicity. The reader is referred to the books by Kato \cite{Kat80} and by Kapitula and Promislow \cite{KaPro13} for further information.
\end{remark}

Since the coefficients of the operator $\cL^\ep$ are periodic, it is well known from Floquet theory that $\cL^\ep$ has no $L^2$-point spectrum and that $\sigma(\cL^\ep) = \ess(\cL^\ep)$ (see Lemma 3.3 in \cite{JMMP14}, or Lemma 59, p. 1487, in \cite{DunSch2}). Moreover, it is possible to parametrize the essential spectrum in terms of Floquet multipliers of the form $e^{i\theta} \in \bbS^1$, $\theta \in \R$ (mod $2\pi$). More precisely, $\lambda \in \sigma(\cL^\ep)$ if and only if
\begin{equation}
\label{Flosp}
\lambda \in \bigcup_{- \pi < \theta \leq \pi} \sigma_\theta,
\end{equation}
where, for each $\theta \in (-\pi,\pi]$, $\sigma_\theta$ denotes the set of complex values, $\lambda \in \C$, for which there exists a bounded non-trivial solution $\bw \in L^\infty (\R) \times L^\infty(\R)$ to the quasi-periodic problem
\begin{equation}
\label{quasip}
\begin{aligned}
\lambda \bw &= \cL^\ep \bw,\\
\bw(T_\ep) &= e^{i \theta} \bw(0).
\end{aligned}
\end{equation}

The union in the right hand side of \eqref{Flosp} is called the \emph{Floquet spectrum} of the periodic coefficient linearized operator $\cL^\ep$ and it coincides with $\sigma(\cL^\ep)$ (cf. \cite{JMMP14,KaPro13}). Thus, the purely essential $L^2$-spectrum can be recast as the union of partial (discrete) spectra $\sigma_\theta$.

One can transform the spectral problem into a family of standard point spectral problems via a \emph{Bloch transformation}. Let us define
\[
y := \frac{\pi x}{T_\ep}, \qquad \bu (y) := e^{-i \theta y/\pi} \bw \Big( \frac{T_\ep y}{\pi}\Big).
\]
Hence, the operator $\partial_x$ transforms into $T_\ep^{-1} (i \theta + \partial_y)$ and the quasi-periodic boundary conditions in \eqref{quasip} become periodic boundary conditions in $y \in [0,\pi]$. The result is the spectral problem,
\[
\lambda \bu = \bB^{-1} \big( T_\ep^{-1} \bA (i \theta + \pi \partial_y) + \bC(T_\ep y/\pi) \big) \bu,
\]
subject to periodic boundary conditions, $\bu(0) = \bu(\pi)$. Let us define,
\[
\bar{\bC}(y) := T_\ep \bC(T_\ep y/\pi) = \begin{pmatrix} T_\ep b(T_\ep y/\pi) & 0 \\ T_\ep a(T_\ep y/\pi) & - T_\ep \end{pmatrix} = \begin{pmatrix} b_1^\ep(y) & 0 \\ a_1^\ep(y) & - T_\ep \end{pmatrix}, 
\]
where the coefficients
\[
a_1^\ep(y) := T_\ep a(T_\ep y/\pi), \qquad b_1^\ep(y) := T_\ep b(T_\ep y/\pi),
\]
are clearly $\pi$-periodic in the $y$ variable. Set $\hat{\lambda} := T_\ep \lambda$ to obtain the equivalent spectral equation,
\begin{equation}
\label{equivsp}
\hat{\lambda} \bu = \cL_\theta^\ep \bu,
\end{equation}
for a family of Bloch operators (indexed by $\theta \in (-\pi,\pi]$) defined as
\begin{equation}
\label{Bloch}
\begin{aligned}
\cL_\theta^\ep &:= \bB^{-1} \big( \bA (i\theta + \pi \partial_y) + \bar{\bC}(y) \big),\\
\cL_\theta^\ep &: \cD(\cL_\theta^\ep) \subset \Ldper([0,\pi]) \times \Ldper([0,\pi]) \rightarrow \Ldper([0,\pi]) \times \Ldper([0,\pi]),
\end{aligned}
\end{equation}
with dense domain $\cD(\cL_\theta^\ep) = \Huper([0,\pi]) \times \Huper([0,\pi])$. The spectrum of each operator in the family, when computed with respect to the space $\Ldper([0,\pi]) \times \Ldper([0,\pi])$, consists entirely of isolated point eigenvalues with finite multiplicity, that is, $
\sigma(\cL_\theta^\ep)_{|\Ldper \times \Ldper} = \ptsp(\cL_\theta^\ep)_{|\Ldper \times \Ldper}$ (see \cite{KaPro13}, pp. 68-70). 

\begin{remark}
It can be shown (cf. \cite{Grd1,KaPro13}) that $\ptsp(\cL^\ep_\theta)_{|\Ldper \times \Ldper}$ depends continuously on the Bloch parameter $\theta$, which is typically a local coordinate for the (continuous) spectrum $\sigma(\cL^\ep)_{|L^2 \times L^2}$, explaining why it consists of \emph{curves} of Floquet spectrum in the complex plane (see Proposition 3.7 in \cite{JMMP14}). This means that $\lambda \in \sigma(\cL^\ep)_{|L^2 \times L^2}$ if and only if $\lambda \in \ptsp(\cL^\ep_\theta)_{\Ldper \times \Ldper}$ for some $\theta \in (-\pi,\pi]$. Consequently, we also have the spectral representation,
\[
\sigma(\cL^\ep)_{|L^2 \times L^2} =  \!\!\bigcup_{-\pi<\theta \leq \pi}\ptsp(\cL^\ep_\theta)_{|\Ldper \times \Ldper}.
\]
\end{remark}

In this fashion, we transform the problem of locating the purely essential spectrum of the linearized operator $\cL^\ep$ into a family of standard point spectral problems for each Bloch operator $\cL_\theta^\ep$ in a suitable periodic space. It is to be observed that $\hat{\lambda} = T_\ep \lambda$, $T_\ep > 0$, yields $\Re \lambda = \Re \hat{\lambda}$ for all $\ep \in (0,\ep_1)$, and, hence, we can state the following 
\begin{proposition}[spectral instability criterion]
\label{propcrit}
For each $\ep \in (0,\ep_1)$ the periodic wave $(U^\ep, V^\ep)$ is spectrally unstable if and only if there exists $\theta_0 \in (-\pi, \pi]$ for which
\[
\ptsp(\cL_{\theta_0}^\ep)_{|\Ldper \times \Ldper} \cap \{ \lambda \in \C \, : \, \Re \lambda > 0\} \neq \varnothing.
\]
\end{proposition}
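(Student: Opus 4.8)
The plan is to obtain the criterion as an immediate corollary of the two ingredients already assembled in this subsection. The first is the spectral representation
\[
\sigma(\cL^\ep)_{|L^2 \times L^2} = \bigcup_{-\pi < \theta \leq \pi} \ptsp(\cL^\ep_\theta)_{|\Ldper \times \Ldper},
\]
which follows from the Floquet parametrization \eqref{Flosp} of the (purely essential) $L^2$-spectrum, the Bloch change of variables leading to \eqref{equivsp}, and the fact that each $\sigma(\cL^\ep_\theta)_{|\Ldper \times \Ldper}$ is pure point spectrum. The second is the rescaling $\hat\lambda = T_\ep \lambda$ with $T_\ep > 0$, for which $\Re\hat\lambda = T_\ep\,\Re\lambda$; since $T_\ep > 0$, the numbers $\lambda$ and $\hat\lambda$ have real parts of the same sign, so $\lambda \in \C_+$ if and only if $\hat\lambda \in \C_+$.

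With these in hand, I would argue as follows. For the ``if'' direction, suppose there is $\theta_0 \in (-\pi,\pi]$ and $\hat\lambda_0 \in \ptsp(\cL^\ep_{\theta_0})_{|\Ldper \times \Ldper}$ with $\Re\hat\lambda_0 > 0$, and set $\lambda_0 := T_\ep^{-1}\hat\lambda_0$. The equivalence between the original spectral problem \eqref{spectp} and the Bloch problem \eqref{equivsp} (equivalently, the spectral representation above) yields $\lambda_0 \in \sigma(\cL^\ep)$, and $\Re\lambda_0 = T_\ep^{-1}\Re\hat\lambda_0 > 0$; hence $\sigma(\cL^\ep) \cap \C_+ \neq \varnothing$, so by Definition \ref{defspectstab} the wave $(U^\ep, V^\ep)$ is spectrally unstable. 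For the converse, suppose the wave is spectrally unstable, so there is $\lambda_0 \in \sigma(\cL^\ep)$ with $\Re\lambda_0 > 0$. By the spectral representation there exists $\theta_0 \in (-\pi,\pi]$ with $\hat\lambda_0 := T_\ep \lambda_0 \in \ptsp(\cL^\ep_{\theta_0})_{|\Ldper \times \Ldper}$, and $\Re\hat\lambda_0 = T_\ep\,\Re\lambda_0 > 0$, which is the asserted nonempty intersection.

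I do not expect a genuine obstacle here: every analytic fact needed — the reduction to the quasi-periodic problems \eqref{quasip}, the Bloch transformation, and the discreteness of the Bloch spectra on $\Ldper \times \Ldper$ — has already been established, and the proposition is a bookkeeping assembly of these facts together with the sign-preserving rescaling. The only point worth a line of care is that $\sigma(\cL^\ep)$ is closed whereas the union $\bigcup_{\theta}\ptsp(\cL^\ep_\theta)_{|\Ldper \times \Ldper}$ need not be; but since $\C_+$ is open, a point of the closure of that union lying in $\C_+$ forces some element of the union itself to lie in $\C_+$, so the two ``intersects $\C_+$'' statements remain equivalent regardless. This makes the criterion robust and completes the argument.
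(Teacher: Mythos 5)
Your argument is correct and takes essentially the same route as the paper, which states the proposition as an immediate consequence of the spectral representation $\sigma(\cL^\ep)_{|L^2\times L^2}=\bigcup_{-\pi<\theta\le\pi}\ptsp(\cL^\ep_\theta)_{|\Ldper\times\Ldper}$ together with the rescaling $\hat\lambda=T_\ep\lambda$. One small point in your favor: you correctly observe that the rescaling preserves the \emph{sign} of the real part (since $\Re\hat\lambda=T_\ep\,\Re\lambda$ with $T_\ep>0$), which is exactly what the criterion needs, whereas the paper loosely writes $\Re\lambda=\Re\hat\lambda$; your closing remark about $\C_+$ being open is a harmless extra safeguard but not required once the representation is stated as an equality.
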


\begin{remark}[the periodic Evans function]
\label{remEv}
Following \cite{AGJ90} (see also \cite{San02,KaPro13}), one may recast the spectral problem \eqref{equivsp} as a first order system, namely,
\begin{equation}
\label{firstordersyst}
\bu_y = \bD(y,\hat{\lambda},\theta) \bu,
\end{equation}
for $\bu \in \Huper([0,\pi]) \times \Huper([0,\pi])$ and with coefficients
\[
\bD(y,\hat{\lambda},\theta) = \frac{1}{\pi}\bA^{-1} \big( \hat{\lambda} \bB - i \theta \bA - \bar{\bC}(y) \big),
\]
which are analytic in $\hat{\lambda} \in \C$ and periodic functions of $y \in [0,\pi]$ of class $C^1([0,\pi])$. (Notice from Theorem \ref{theoexist} that the subcharacteristic condition holds for the wave speeds, $c(\ep)^2 \tau < 1$ and, therefore, $\bA$ is invertible.) Since each coefficient matrix $\bD(y,\hat{\lambda},\theta)$ is periodic in $y$ we may apply Floquet theory. Let $\bF = \bF(y,\theta,\hat{\lambda})$ denote the principal fundamental matrix for system \eqref{firstordersyst}, that is, the unique solution to $\partial_y \bF= \bD(y,\hat{\lambda},\theta) \bF$ with initial condition $\bF(0,\hat{\lambda},\theta) = \bI$ for every $\hat{\lambda} \in \C$, $\theta \in (-\pi,\pi]$. Hence, $\bar{\bF}( \hat{\lambda}, \theta) :=\bF(\pi, \hat{\lambda}, \theta)$ is the \emph{monodromy matrix} for system \eqref{firstordersyst} and it is an entire function of $\hat{\lambda }\in \C$ (see, e.g., \cite{JMMP14,KaPro13}). Gardner \cite{Grd1} defines the \emph{periodic Evans function} as
\begin{equation}
\label{Evfn}
D(\hat{\lambda},\theta) := \det (\bar{\bF}( \hat{\lambda}, \theta) - e^{i\theta} \bI).
\end{equation}
For each $\theta \in (-\pi,\pi]$ the periodic Evans function is an entire function of $\hat{\lambda} \in \C$, whose isolated zeroes are discrete and coincide in order (multiplicity) and location with the discrete Bloch spectrum, $\ptsp(\cL^\ep_\theta)_{|\Ldper \times \Ldper}$ (see Kapitula and Promislow \cite{KaPro13} for more information).
\end{remark}

\subsection{Relatively bounded perturbations}

Following previous analyses \cite{AlPl21,KDT19,CheDu}, we recast the spectral equation \eqref{equivsp} for each Bloch operator as a perturbation problem. Recall (from Theorem \ref{theoexist}) that the periodic waves have fundamental period and amplitude given by
%
\[
T_\ep = \frac{2\pi}{\omega_0} + O(\ep) =: T_0 + O(\ep),
\]
and
\[
|U^\ep|, |V^\ep| = O(\sqrt{|c(\ep)- c_0|}) = O(\sqrt{\ep}),
\]
respectively, as $\ep \to 0^+$. Hence, we write
\[
T_\ep = T_0 + \sqrt{\ep} T_1,
\]
where $T_1 := (T_\ep - T_0)/\sqrt{\ep} = O(\sqrt{\ep}) = O(1)$ as $\ep \to 0^+$. Same argument applies to the wave speed,
\[
c = c(\ep) = c_0 + \sqrt{\ep} c_1,
\]
where $c_1 := (c(\ep) - c_0)/\sqrt{\ep} = O(1)$. The coefficients are rewritten as
\[
\begin{aligned}
a_1^\ep(y) &= \big(T_0 + \sqrt{\ep} T_1\big) f'(U^\ep(T_\ep y/\pi))\\
&= \big(T_0 + \sqrt{\ep} T_1\big) \big( f'(0) + O(\sqrt{\ep}) \big)\\
&= T_0 f'(0) + O(\sqrt{\ep}).
\end{aligned}
\]
We denote,
\[
a_1^0 := T_0 f'(0), \qquad a_1^1(y) := \frac{a_1^\ep(y) - T_0 f'(0)}{\sqrt{\ep}} = O(1),
\]
so that
\[
a_1^\ep(y) = a_1^0 + \sqrt{\ep} a_1^1(y).
\]
Likewise, since $g'(U^\ep(T_\ep y/\pi)) = g'(0) + O(\sqrt{\ep})$, we obtain
\[
b_1^\ep(y) = b_1^0 + \sqrt{\ep} b_1^1(y),
\] 
where
\[
b_1^0 := T_0 g'(0) > 0, \qquad b_1^1(y) := \frac{b_1^\ep(y) - T_0 g'(0)}{\sqrt{\ep}} = O(1).
\]
Moreover, the coefficient matrices are also recast as
\[
\bA = \begin{pmatrix} c(\ep) & -1 \\ -1 & c(\ep)\tau
\end{pmatrix} = \bA_0 + \sqrt{\ep} \bA_1,
\]
with
\[
\bA_0 := \begin{pmatrix} c_0 & -1 \\ -1 & c_0\tau
\end{pmatrix} = O(1), \qquad \bA_1 := \begin{pmatrix} c_1 & 0 \\ 0 & c_1 \tau
\end{pmatrix} = O(1).
\]
Notice that we can write $\bA = \bA_0 + \sqrt{\ep} c_1 \bB$ so that $\bB^{-1} \bA = \bB^{-1} \bA_0 + \sqrt{\ep} c_1 \bI$. We also define
\[
\bar{\bC}_0 := \begin{pmatrix} b_1^0 & 0 \\ a_1^0 & - T_0 \end{pmatrix} = O(1), \qquad \bar{\bC}_1(y) := \begin{pmatrix} b_1^1(y) & 0 \\ a_1^1(y) & - T_1 \end{pmatrix} = O(1), 
\]
 as $\ep \to 0^+$, so that $\bar{\bC}(y) = \bar{\bC}_0 + \sqrt{\ep} \bar{\bC}_1(y)$. Finally, define
 \[
 \vep := \sqrt{\ep} > 0.
 \]
To sum up, we reformulate the spectral problem \eqref{equivsp} as
\begin{equation}
\label{pertsp}
\hat{\lambda} \bu = \cL_\theta^0 \bu + \vep \cL_\theta^1 \bu,
\end{equation}
where the operators
\begin{equation}
\label{L01}
\begin{aligned}
\cL_\theta^0 &:= \bB^{-1} \bA_0 \big( i \theta + \pi \partial_y \big) + \bB^{-1} \bar{\bC}_0,\\
\cL_\theta^1 &:= c_1 \big( i \theta + \pi \partial_y \big) + \bB^{-1} \bar{\bC}_1(y),
\end{aligned}
\end{equation}
for any $\theta \in (-\pi,\pi]$, are closed and densely defined in $\Ldper([0,\pi]) \times \Ldper([0,\pi])$ with common domain $\cD := \cD(\cL_\theta^j) = \Huper([0,\pi]) \times \Huper([0,\pi])$, $j = 0,1$.

At this point, let us recall that if $\cA, \cB : X \to Y$ are linear operators in $X$, $Y$, Banach spaces, we say that $\cA$ is relatively bounded with respect to $\cB$ (or simply $\cB$-bounded) provided that $\cD(\cB) \subset \cD(\cA)$ and that there exist constants $\alpha, \beta \geq 0$ such that
\[
\| \cA u \| \leq \alpha \| u \| + \beta \| \cB u \|,
\]
for all $u \in \cD(\cB)$ (see Kato \cite{Kat80}, p. 190). The following result plays a key role in the analysis.

\begin{lemma}
\label{lemuno}
For any fixed $\theta \in (-\pi, \pi]$, the operator $\cL_\theta^1$ is relatively bounded with respect to $\cL_\theta^0$.
\end{lemma}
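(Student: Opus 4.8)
The plan is to show that $\cL^1_\theta u$ can be controlled by $\|u\|$ together with $\|\cL^0_\theta u\|$ for every $u \in \cD$. First I would observe that both operators share the same highest-order term structure: $\cL^0_\theta$ has principal part $\pi \bB^{-1}\bA_0 \partial_y$, while $\cL^1_\theta$ has principal part $\pi c_1 \partial_y$. Since $c(\ep)^2\tau < 1$ (from Theorem \ref{theoexist}), the matrix $\bA_0 = \begin{pmatrix} c_0 & -1 \\ -1 & c_0\tau \end{pmatrix}$ is invertible; moreover $\bB$ is trivially invertible. Hence $\bB^{-1}\bA_0$ is an invertible constant matrix, and $\partial_y \bu = (\bB^{-1}\bA_0)^{-1}\bigl( \cL^0_\theta \bu - i\theta \bB^{-1}\bA_0 \bu - \bB^{-1}\bar{\bC}_0 \bu \bigr)$. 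Taking $\Ldper\times\Ldper$ norms and using boundedness of the constant matrices $(\bB^{-1}\bA_0)^{-1}$, $\bB^{-1}\bar{\bC}_0$, this yields an estimate of the form
\[
\|\partial_y \bu\|_{\Ldper\times\Ldper} \leq C_1 \|\cL^0_\theta \bu\|_{\Ldper\times\Ldper} + C_2 \|\bu\|_{\Ldper\times\Ldper}
\]
for constants $C_1, C_2 \geq 0$ depending only on $\tau$, $c_0$, $T_0$, $f'(0)$, $g'(0)$ and $\theta$ (the $\theta$-dependence being harmless since $\theta$ is fixed).

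Next I would bound $\cL^1_\theta \bu = c_1(i\theta + \pi\partial_y)\bu + \bB^{-1}\bar{\bC}_1(y)\bu$ directly. The coefficient matrix $\bar{\bC}_1(y)$ has entries $b_1^1(y)$, $a_1^1(y)$, $-T_1$ which are all $O(1)$ continuous (indeed $C^1$, since $U^\ep \in C^3$) $\pi$-periodic functions, hence bounded on $[0,\pi]$; so $\|\bB^{-1}\bar{\bC}_1(y)\bu\|_{\Ldper\times\Ldper} \leq C_3 \|\bu\|_{\Ldper\times\Ldper}$. Combining with the triangle inequality and the estimate on $\|\partial_y\bu\|$ from the previous step,
\[
\|\cL^1_\theta \bu\|_{\Ldper\times\Ldper} \leq |c_1|\,|\theta|\,\|\bu\| + \pi|c_1|\,\|\partial_y\bu\| + C_3\|\bu\| \leq \alpha \|\bu\|_{\Ldper\times\Ldper} + \beta \|\cL^0_\theta \bu\|_{\Ldper\times\Ldper},
\]
with $\beta := \pi|c_1|C_1$ and $\alpha := |c_1|\,|\theta| + C_3 + \pi|c_1|C_2$, both finite and nonnegative. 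Since $\cD(\cL^0_\theta) = \cD = \cD(\cL^1_\theta)$ by construction, the inclusion $\cD(\cL^0_\theta)\subset\cD(\cL^1_\theta)$ required in the definition of relative boundedness holds trivially, and the lemma follows.

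The only point requiring genuine care is the invertibility of the principal-part matrix $\bB^{-1}\bA_0$, which is exactly where the subcharacteristic condition enters; without it one could not solve for $\partial_y \bu$ in terms of $\cL^0_\theta\bu$ and lower-order terms, and the argument would collapse. Everything else is routine: the boundedness of the zeroth-order coefficient matrices is immediate from their explicit constant or continuous-periodic form, and no delicate functional-analytic input (closed graph theorem, interpolation, etc.) is needed — the estimate is obtained by directly inverting a $2\times 2$ constant matrix. I therefore expect the write-up to be short, with the bulk of the text devoted to recording the constants and invoking $\det \bA_0 = c_0^2\tau - 1 \neq 0$.
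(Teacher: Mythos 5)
Your proposal is correct and follows essentially the same route as the paper: both isolate $\partial_y\bu$ by inverting the constant principal coefficient matrix $\bB^{-1}\bA_0$ (whose invertibility is exactly the subcharacteristic condition $c_0^2\tau<1$), obtain $\|\partial_y\bu\|\lesssim \|\cL^0_\theta\bu\|+\|\bu\|$, and then bound $\|\cL^1_\theta\bu\|$ by the triangle inequality using boundedness of the continuous, $\pi$-periodic coefficients. The only cosmetic difference is that the paper establishes the key bound $\|\partial_y\bu\|\leq C_4^{-1}\|\bB^{-1}\bA_0\partial_y\bu\|$ by showing $\bM_0^\top\bM_0$ is positive definite, whereas you equivalently invoke the operator norm of $(\bB^{-1}\bA_0)^{-1}$; also note the invertibility you need is of $\bA_0$ (built from $c_0$, requiring $c_0^2\tau<1$), not of $\bA$ (built from $c(\ep)$), though both hold under the hypotheses.
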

\begin{proof}
Since both operators have the same domain, we only need to show that there exist constants $\alpha, \beta \geq 0$ such that
\[
\| \cL_\theta^1 \bu \|_{\Ldper \times \Ldper} \leq \alpha \| \bu \|_{\Ldper \times \Ldper} + \beta \| \cL_\theta^0 \bu \|_{\Ldper \times \Ldper},
\]
for all $\bu \in \Huper([0,\pi]) \times \Huper([0,\pi])$.

First, notice that since $\bar{\bC}_1(y) = O(1)$ as $\vep = \sqrt{\ep} \to 0^+$ and $|\theta| \leq \pi$ we therefore obtain
\begin{align}
\| \cL_\theta^1 \bu \|_{\Ldper \times \Ldper}  &= \| c_1 \big( i \theta + \pi \partial_y \big) \bu + \bB^{-1} \bar{\bC}_1(y) \bu \|_{\Ldper \times \Ldper} \nonumber\\
&\leq \pi |c_1| \| \partial_y \bu \|_{\Ldper \times \Ldper} + |\theta| |c_1| \| \bu \|_{\Ldper \times \Ldper} + \| \bB^{-1} \bar{\bC}_1(\cdot) \|_{L^\infty} \| \bu \|_{\Ldper \times \Ldper} \nonumber\\
&\leq 2 \pi C_1 \| \partial_y \bu \|_{\Ldper \times \Ldper} + (2 \pi C_1 + C_2) \| \bu  \|_{\Ldper \times \Ldper}, \label{firstest}
\end{align}
for some uniform constants $C_1, C_2 > 0$ which may depend on $\tau$ but are independent of $\bu$ and $\vep$. On the other hand, we estimate
\[
\begin{aligned}
\| \cL_\theta^0 \bu \|_{\Ldper \times \Ldper} &= \| \bB^{-1} \bA_0 (i \theta + \pi \partial_y) \bu +  \bB^{-1}  \bar{\bC}_0 \bu \|_{\Ldper \times \Ldper} \\
&\geq \pi \| \bB^{-1} \bA_0 \partial_y \bu \|_{\Ldper \times \Ldper} - \| \bB^{-1}(\bar{\bC}_0 + i \theta \bA_0) \bu )\|_{\Ldper \times \Ldper},
\end{aligned}
\]
which yields
\begin{equation}
\label{star}
\begin{aligned}
\| \bB^{-1} \bA_0 \partial_y \bu \|_{\Ldper \times \Ldper} &\leq \frac{1}{\pi} \| \cL_\theta^0 \bu \|_{\Ldper \times \Ldper} + \frac{1}{\pi} \| \bB^{-1}(\bar{\bC}_0 + i \theta \bA_0) \bu )\|_{\Ldper \times \Ldper}\\
&\leq \frac{1}{\pi} \| \cL_\theta^0 \bu \|_{\Ldper \times \Ldper}  + \frac{C_3}{\pi} \| \bu \|_{\Ldper \times \Ldper},
\end{aligned}
\end{equation}
for some uniform $C_3 = C_3(\tau) > 0$, inasmuch as $|\bB^{-1}|, |\bar{\bC}_0|$ and $|\bA_0|$ are bounded and $|\theta| \leq \pi$.

Now we claim that there exists a uniform positive constant $C_4 = C_4(\tau) > 0$ such that
\begin{equation}
\label{estC4}
\| \bB^{-1} \bA_0 \partial_y \bu \|_{\Ldper \times \Ldper} \geq C_4 \| \partial_y \bu \|_{\Ldper \times \Ldper},
\end{equation}
for all $\bu \in \Huper([0,\pi]) \times \Huper([0,\pi])$. Indeed, let us denote
\[
\bM_0 = \bM_0(\tau) := \bB^{-1} \bA_0 = \begin{pmatrix} 1 & 0 \\ 0 & 1/\tau \end{pmatrix} \begin{pmatrix} c_0 & -1 \\ -1 & \tau c_0 \end{pmatrix} =  \begin{pmatrix} c_0 & -1 \\ -1/\tau & c_0 \end{pmatrix}.
\]
In view of the subcharacteristic condition ($c_0^2 \tau < 1$ for all $\tau \in (0, \btau)$), it is clear that $r_0(\tau) := \det \bM_0 = c_0^2 - 1/\tau < 0$. Since the matrix
\[
\bM_0^\top \bM_0 = \begin{pmatrix} c_0^2 + 1/\tau^2 & -c_0(1 + 1/\tau) \\ -c_0(1 + 1/\tau) & 1 + c_0^2 \end{pmatrix},
\]
is clearly symmetric, $\det (\bM_0^\top \bM_0) = (\det \bM_0)^2 = r_0(\tau)^2 > 0$ and $\tr (\bM_0^\top \bM_0) = 2c_0^2 + 1 + 1/\tau^2 > 0$, we conclude that $\bM_0^\top \bM_0$ is positive definite and for each fixed $\tau \in (0,\btau)$ there exists a constant $C_4 = C_4(\tau) > 0$ such that
\[
| \bB^{-1} \bA_0 \bw |^2 = \bw^* \bM_0^\top \bM_0 \bw \geq C_4(\tau)^2 |\bw|^2,
\]
for all $\bw \in \C^2$. This yields, in turn, estimate \eqref{estC4} upon integration of the inequality $|\bB^{-1} \bA_0 \partial_y \bu|^2 \geq C_4(\tau)^2 |\partial_y \bu|^2$ over a period.

Therefore, substitution of \eqref{estC4} into \eqref{star} yields
\[
\| \partial_y \bu \|_{\Ldper \times \Ldper} \leq \frac{C_4^{-1}}{\pi} \| \cL_\theta^0 \bu \|_{\Ldper \times \Ldper} + \frac{C_3 C_4^{-1}}{\pi} \| \bu \|_{\Ldper \times \Ldper}.
\]
This implies, together with estimate \eqref{firstest}, that
\[
\| \cL_\theta^1 \bu \|_{\Ldper \times \Ldper} \leq \beta(\tau) \| \cL_\theta^0 \bu \|_{\Ldper \times \Ldper} + \alpha(\tau) \| \bu \|_{\Ldper \times \Ldper},
\]
with
\[
\begin{aligned}
\alpha(\tau) &= 2C_1 C_3 C_4^{-1} + 2\pi C_1 + C_2 > 0,\\
\beta(\tau) &= 2C_1 C_4^{-1} > 0.
\end{aligned}
\]
The lemma is proved.
\end{proof}

Let us now examine the case of a Floquet exponent with $\theta = 0$. The following lemma warrants the existence of an unstable eigenvalue for the unperturbed problem.
\begin{lemma}
\label{lemdos}
$\lambda_0 := b_1^0 = T_0 g'(0) > 0$ is an isolated, simple (or non-degenerate) eigenvalue of the Bloch operator with $\theta = 0$,
\[
\cL_0^0 = \pi \bB^{-1} \bA_0 \partial_y + \bB^{-1} \bar{\bC}_0,
\]
in $\Ldper([0,\pi]) \times \Ldper([0,\pi])$, associated to the constant eigenfunction 
\[
\bu_0 = \left(1, \Big(b_1^0 + \frac{T_0}{\tau}\Big)^{-1} \frac{a_1^0}{\tau} \right)^\top \in \Huper([0,\pi]) \times \Huper([0,\pi]).
\] 
\end{lemma}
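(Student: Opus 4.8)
The plan is to exploit that, for $\theta = 0$, the Bloch operator
$\cL_0^0 = \pi \bB^{-1}\bA_0 \partial_y + \bB^{-1}\bar{\bC}_0 = \pi \bM_0 \partial_y + \bB^{-1}\bar{\bC}_0$
has \emph{constant} coefficients, and is therefore block-diagonalized by the Fourier basis $\{\, \bw\, e^{2iky} : k\in\Z,\ \bw\in\C^2\,\}$ of $\Ldper([0,\pi])\times\Ldper([0,\pi])$. Substituting $\bu = \bw\, e^{2iky}$ into $\cL_0^0 \bu = \lambda \bu$ turns the eigenvalue equation into the $2\times 2$ algebraic eigenvalue problem $\mathbf{N}_k \bw = \lambda \bw$, where $\mathbf{N}_k := 2\pi i k\, \bM_0 + \bB^{-1}\bar{\bC}_0$, so that $\sigma(\cL_0^0) = \bigcup_{k\in\Z}\sigma(\mathbf{N}_k)$ and the algebraic multiplicity of any $\lambda$ as an eigenvalue of $\cL_0^0$ equals the sum over $k\in\Z$ of its algebraic multiplicity as an eigenvalue of $\mathbf{N}_k$ (the Riesz projection of the block-diagonal operator being the direct sum of the finite-rank Riesz projections of the blocks). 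Moreover, using the bound $|\bM_0 \bw| \geq C_4 |\bw|$ established in the proof of Lemma \ref{lemuno} (valid because the subcharacteristic condition makes $\bM_0$ invertible), one gets $|\mathbf{N}_k \bw| \geq (2\pi |k| C_4 - \|\bB^{-1}\bar{\bC}_0\|)|\bw|$, so only finitely many $\mathbf{N}_k$ can contribute spectrum to any bounded region; hence $\bigcup_k \sigma(\mathbf{N}_k)$ is a discrete set and every eigenvalue of $\cL_0^0$ is automatically isolated. (Equivalently, $\bM_0$ invertible makes $(\cL_0^0 - z)^{-1}$ compact via the Rellich embedding $\Huper \hookrightarrow \Ldper$, for any $z$ in the resolvent set.)

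Next I would treat $k=0$. Here $\mathbf{N}_0 = \bB^{-1}\bar{\bC}_0 = \begin{pmatrix} b_1^0 & 0 \\ a_1^0/\tau & -T_0/\tau \end{pmatrix}$ is lower triangular with the two \emph{distinct} eigenvalues $b_1^0 = T_0 g'(0) > 0$ and $-T_0/\tau < 0$; hence $b_1^0$ is an algebraically simple eigenvalue of $\mathbf{N}_0$, and solving $(\mathbf{N}_0 - b_1^0 \bI)\bw = 0$ produces exactly the constant vector $\bu_0 = \big(1,\ (b_1^0 + T_0/\tau)^{-1} a_1^0/\tau\big)^\top$, which is well defined since $b_1^0 + T_0/\tau = T_0(g'(0) + 1/\tau) > 0$. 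Because $\partial_y \bu_0 = 0$, a direct substitution gives $\cL_0^0 \bu_0 = \bB^{-1}\bar{\bC}_0 \bu_0 = b_1^0 \bu_0$, confirming that $\bu_0 \in \Huper([0,\pi])\times\Huper([0,\pi])$ is the claimed eigenfunction and spans the (one-dimensional) eigenspace.

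It then remains to rule out $b_1^0 \in \sigma(\mathbf{N}_k)$ for $k\neq 0$, which is the step where the subcharacteristic condition enters decisively. Writing $m := 2\pi k$ and using $\bM_0 = \begin{pmatrix} c_0 & -1 \\ -1/\tau & c_0 \end{pmatrix}$, a short computation gives
\[
\det(\mathbf{N}_k - b_1^0 \bI) = m^2\Big(\tfrac{1}{\tau} - c_0^2\Big) + i m\Big(\tfrac{a_1^0}{\tau} - c_0\big(\tfrac{T_0}{\tau} + b_1^0\big)\Big),
\]
whose real part is $(2\pi k)^2(1/\tau - c_0^2) > 0$ for every $k\neq 0$, because $c_0^2 \tau < 1$. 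Hence this determinant never vanishes and $b_1^0$ is an eigenvalue of no $\mathbf{N}_k$ with $k\neq 0$. Combining the three steps, $\lambda_0 = b_1^0$ is an isolated eigenvalue of $\cL_0^0$ with algebraic multiplicity $1 + \sum_{k\neq 0} 0 = 1$, i.e. it is simple, with constant eigenfunction $\bu_0$.

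The step I expect to require the most care is the first one: making rigorous the identification $\sigma(\cL_0^0) = \bigcup_k \sigma(\mathbf{N}_k)$ for the \emph{unbounded} constant-coefficient operator, together with the additivity of algebraic multiplicities over the Fourier blocks and the uniform-in-$k$ lower bound that forces this union to be discrete (so that $\lambda_0$ is genuinely isolated, and not merely a point approached by spectrum emanating from high-frequency blocks). All the remaining computations — the triangular structure at $k=0$, the eigenvector, and the sign of the determinant at $k\neq 0$ — are elementary linear algebra.
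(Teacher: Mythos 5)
Your proof is correct, but it takes a genuinely different route from the paper's. You exploit that $\cL_0^0$ has constant coefficients, so the Fourier modes $e^{2iky}$ block-diagonalize it into the $2\times 2$ family $\mathbf{N}_k = 2\pi i k\,\bM_0 + \bB^{-1}\bar{\bC}_0$; the $k=0$ block is lower triangular with the two distinct eigenvalues $b_1^0 > 0$ and $-T_0/\tau < 0$, giving simplicity and the eigenvector at once, and for $k\neq 0$ your determinant computation
\[
\det(\mathbf{N}_k - b_1^0 \bI) = (2\pi k)^2\Big(\tfrac{1}{\tau} - c_0^2\Big) + 2\pi ik\Big(\tfrac{a_1^0}{\tau} - c_0\big(\tfrac{T_0}{\tau} + b_1^0\big)\Big)
\]
is correct and its real part is strictly positive precisely because of the subcharacteristic condition. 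The paper proceeds differently: after the same direct verification that $\bu_0$ is an eigenfunction, it establishes simplicity by splitting into the cases $a_1^0 = c_0(b_1^0\tau + T_0)$ (where $f'(0)=0$ follows, and the absence of a Jordan chain is checked by hand) and $a_1^0 \neq c_0(b_1^0\tau + T_0)$ (where it invokes the periodic Evans function $D(\lambda,0) = (e^{\pi\zeta_1(\lambda)}-1)(e^{\pi\zeta_2(\lambda)}-1)$ and Lemma~8.4.1 of Kapitula--Promislow to equate algebraic multiplicity with the order of the zero). Your Fourier argument is shorter and more transparent — it avoids the case split entirely and makes the role of subcharacteristicity plain, at the cost of requiring the constant-coefficient structure (including the standard but not entirely trivial fact that algebraic multiplicity sums over Fourier blocks, which your compact-resolvent/uniform-lower-bound observation handles correctly); the Evans-function route is heavier but is the machinery that survives when coefficients are non-constant, which is why the paper sets it up in Remark~3.4.
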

\begin{proof}
If we consider a constant eigenfunction of the form $\bu_0 = (u_1, u_2)^\top \in \Huper([0,\pi]) \times \Huper([0,\pi])$ then by direct computation we find that
\[
\cL_0^0 \bu_0 = \bB^{-1} \bar{\bC}_0 \bu_0 = \begin{pmatrix} 1 & 0 \\ 0 & 1/\tau \end{pmatrix} \begin{pmatrix}  b_1^0 & 0 \\ a_1^0 & - T_0 \end{pmatrix} \begin{pmatrix} u_1 \\ u_2\end{pmatrix} = \begin{pmatrix} b_1^0 u_1 \\ \tau^{-1} ( a_1^0 u_1 - T_0 u_2 ) \end{pmatrix} = \lambda_0 \begin{pmatrix} u_1 \\ u_2 \end{pmatrix}.
\]
Since setting $u_1 = 0$ leads to a contradiction, we define $u_1 = 1$ so that $\lambda_0 = b_1^0 > 0$ and
\[
u_2 = \Big(b_1^0 + \frac{T_0}{\tau}\Big)^{-1} \frac{a_1^0}{\tau}.
\]
Notice that $b_1^0 + T_0/\tau > 0$. This shows that $\lambda_0 = b_1^0 = T_0 g'(0) > 0$ belongs to the point spectrum of $\cL_0^0$ and the eigenfunction can be chosen as claimed. This also implies that $\lambda_0$ is an isolated eigenvalue of finite (algebraic) multiplicity.

In order to show that $\lambda_0 = b_1^0$ is a simple eigenvalue, we distinguish between two cases:
\begin{itemize}
\item[(i)] $a_1^0 = c_0 (b_1^0 \tau + T_0)$,
\item[(ii)] $a_1^0 \neq c_0 (b_1^0 \tau + T_0)$.
\end{itemize}

First, we study case (i). After substituting the expressions for $c_0$, $a_1^0$ and $b_1^0$ into $a_1^0 = c_0 (b_1^0 \tau + T_0)$, and because of $T_0 > 0$, we arrive at the relation
\[
f'(0) (1- \tau g'(0)) = f'(0) (1 + \tau g'(0)).
\]
Under our assumptions, $1- \tau g'(0) \neq 1 + \tau g'(0)$ for all $\tau \in (0, \btau)$. Therefore, this last relation implies that case (i) happens only when $f'(0) = 0$. Therefore, $c_0 = a_1^0 = 0$ and the eigenfunction associated to $\lambda_0$ is $\bu_0 = (1,0)^\top$. Let us now suppose that $\bw = (w_1, w_2)^\top \in \Huper([0,\pi]) \times \Huper([0,\pi])$ is a non-trivial first element of a Jordan chain for $\cL_0^0 - \lambda_0$, that is, a non-zero solution to $(\cL_0^0 - \lambda_0) \bw = \bu_0$. In extenso,
\[
(\cL_0^0 - \lambda_0) \bw = \pi \begin{pmatrix} - \partial_y w_2 + b_1^0 w_1 \\ - (\partial_y w_1 + T_0 w_2)/ \tau \end{pmatrix} - b_1^0 \begin{pmatrix} w_1 \\ w_2 \end{pmatrix}
 = \begin{pmatrix} 1 \\ 0\end{pmatrix},
\]
where we have substituted $c_0 = a_1^0 = 0$. Thus, we obtain the system
\[
\begin{aligned}
- \pi \partial_y w_2 &= 1, \\
- \frac{\pi}{\tau} \partial_y w_1 - \Big(b_1^0 + \frac{T_0}{\tau}\Big) w_2 &=0,
\end{aligned}
\]
which has no solution for $w_j \in \Huper([0,\pi])$, $j=1,2$, as it is easily verified.  This shows that there are no non-trivial Jordan chains for $\cL_0^0 - \lambda_0$ and the algebraic multiplicity of $\lambda_0 = b_1^0$ is equal to one.

Let us consider now case (ii). In view of Remark \ref{remEv}, the eigenvalue problem $\cL_0^0 \bu = \lambda \bu$ is equivalent to the first order system
\begin{equation}
\label{firstos}
\partial_y \bu = \bD(\lambda) \bu,
\end{equation}
for $\bu \in \Huper([0,\pi]) \times \Huper([0,\pi])$, where 
\[
\bD(\lambda) = \frac{1}{\pi} \bA_0^{-1} \big(\lambda \bB - \bar{\bC}_0\big) = \frac{1}{\pi} (1 - c_0^2\tau)^{-1} \begin{pmatrix} c_0 \tau (b_1^0 - \lambda) + a_1^0 & - (\lambda \tau + T_0) \\ b_1^0 - \lambda + c_0 a_1^0 & - c_0 ( \lambda \tau + T_0) \end{pmatrix},
\]
as it is easily verified. Notice that $\bA_0$ is invertible thanks to the subcharacteristic condition, $c_0^2\tau < 1$. The eigenvalues of $\bD(\lambda)$ are the $\zeta$-roots of
\[
\begin{aligned}
Q(\zeta,\lambda) &= \big( \zeta + c_0(\lambda \tau + T_0) \big) \big( \zeta - c_0\tau (b_1^0 - \lambda) - a_1^0 \big) + \big( \lambda \tau + T_0 \big) \big( b_1^0 - \lambda + c_0 a_1^0 \big)  \\
&= \zeta^2 + q(\lambda) \zeta + p(\lambda) = 0,
\end{aligned}
\]
where
\[
\begin{aligned}
q(\lambda) &:= c_0(\lambda \tau + T_0) - c_0\tau (b_1^0 - \lambda) - a_1^0,\\
p(\lambda) &:= (1 - c_0^2\tau)(b_1^0 - \lambda) (\lambda \tau + T_0).
\end{aligned}
\]
Thus, the eigenvalues of $\bD(\lambda)$, denoted as $\zeta_j = \zeta_j(\lambda)$, $j = 1,2$, for all $\lambda \in \C$, are given by
\[
\begin{aligned}
\zeta_1(\lambda) &= - \tfrac{1}{2}q(\lambda) - \tfrac{1}{2} \big( q(\lambda)^2 - 4 p(\lambda) \big)^{1/2},\\
\zeta_2(\lambda) &= - \tfrac{1}{2}q(\lambda) + \tfrac{1}{2} \big( q(\lambda)^2 - 4 p(\lambda) \big)^{1/2}.
\end{aligned}
\]
In particular, when $\lambda = b_1^0 > 0$ we have $q(b_1^0) = c_0(b_1^0 \tau + T_0) - a_1^0 \neq 0$ (case (ii)) and $p(b_1^0) = 0$. Consequently,
\[
\zeta_1(b_1^0) = - \tfrac{1}{2}q(b_1^0) - \tfrac{1}{2}|q(b_1^0)|, \qquad \zeta_2(b_1^0) = - \tfrac{1}{2}q(b_1^0) + \tfrac{1}{2}|q(b_1^0)|.
\]
Depending on the sign of $q(b_1^0)$ one of the two eigenvalues is zero. Without loss of generality assume that $q(b_1^0) > 0$. Thus,
\[
\zeta_1(b_1^0) = - q(b_1^0) < 0, \qquad \zeta_2(b_1^0) = 0.
\]
Let us compute
\[
\partial_\lambda q(\lambda) = 2 c_0 \tau, \qquad \partial_\lambda p(\lambda) = (1-c_0^2 \tau)(\tau b_1^0 - T_0 - 2 \tau \lambda),
\]
for all $\lambda \in \C$. This yields, in particular,
\[
\partial_\lambda q(b_1^0) = 2 c_0 \tau, \qquad \partial_\lambda p(b_1^0) = - (1-c_0^2 \tau)(b_1^0 \tau + T_0) < 0.
\]
We also compute
\[
\begin{aligned}
\partial_\lambda \zeta_1(\lambda) &= - \tfrac{1}{2}\partial_\lambda q(\lambda) -  \frac{\big(q(\lambda) \partial_\lambda q(\lambda) - 2 \partial_\lambda p(\lambda)\big)}{2 \big( q(\lambda)^2 - 4 p(\lambda) \big)^{1/2}},\\
\partial_\lambda \zeta_2(\lambda) &= - \tfrac{1}{2}\partial_\lambda q(\lambda) + \frac{\big(q(\lambda) \partial_\lambda q(\lambda) - 2 \partial_\lambda p(\lambda)\big)}{2 \big( q(\lambda)^2 - 4 p(\lambda) \big)^{1/2}}
\end{aligned}
\]
Since $p(b_1^0) = 0$, after straightforward algebra one obtains
\[
\partial_\lambda \zeta_1(b_1^0) \in \R, \qquad \partial_\lambda \zeta_2(b_1^0) = \frac{(1-c_0^2 \tau)(b_1^0 \tau + T_0)}{q_1(b_1^0)} > 0.
\] 

If we interpret the first order (constant coefficient) system \eqref{firstos} as a periodic coefficient problem with period $\pi$, the monodromy matrix is simply $\exp ( \pi \bD(\lambda))$ and the associated Evans function \eqref{Evfn} to the spectral problem for $\cL_0^0 - \lambda$ with $\theta = 0$ is 
\[
D(\lambda,0) = \det \big(\!\exp(\pi \bD(\lambda) ) - \bI \big) = \big( e^{\pi \zeta_1(\lambda)} - 1\big) \big( e^{\pi \zeta_2(\lambda)} - 1\big), \qquad \lambda \in \C.
\]
Clearly, $D(b_1^0,0) = 0$, inasmuch as $\zeta_2(b_1^0) = 0$ (i.e. $\lambda_0 = b_1^0 \in \ptsp(\cL_0^0)$). Now, since $\partial_\lambda \zeta_2(b_1^0) \neq 0$ and $\zeta_1(b_1^0) \neq 0$, we obtain
\[
\partial_\lambda D(b_1^0,0) = \pi \partial_\lambda \zeta_2(b_1^0) \big( e^{\pi \zeta_1(b_1^0)} - 1\big) \neq 0.
\]
From Lemma 8.4.1 in \cite{KaPro13} we conclude that $\lambda_0 = b_1^0 > 0$ has algebraic multiplicity equal to one. The lemma is now proved.
\end{proof}

We now pay attention to the perturbed spectral problem \eqref{pertsp} for the Bloch operator with $\theta = 0$, namely, to
\begin{equation}
\label{pertsp2}
\hat{\lambda} \bu = \cL_0^0 \bu + \vep \cL_0^1 \bu, \quad \qquad \bu \in \Huper([0,\pi]) \times \Huper([0,\pi]).
\end{equation}
Let us complexify the family and define
\begin{equation}
\label{compf}
\begin{aligned}
\widetilde{\cL}_0(\kappa) &:= \cL_0^0 + \kappa \cL_0^1,\\
\widetilde{\cL}_0(\kappa) &: \cD(\widetilde{\cL}_0(\kappa)) \subset \Ldper([0,\pi]) \times \Ldper([0,\pi]) \to \Ldper([0,\pi]) \times \Ldper([0,\pi]),
\end{aligned}
\end{equation}
where $\kappa \in B_{\hat{\vep}} = \{\kappa \in \C \, : \, |\kappa| < \hat{\vep}\}$, for some $\hat{\vep} > 0$ to be determined. That is, we extend the perturbed problem \eqref{pertsp2} to a complex open neighborhood of the origin. The family of operators in \eqref{compf} has a common domain, $\cD(\widetilde{\cL}_0(\kappa)) = \cD = \Huper([0,\pi]) \times \Huper([0,\pi])$, independent of $\kappa \in B_{\hat{\vep}}$. Let us recall a concept from the perturbation theory of linear operators (see Kato \cite{Kat80}, p. 375, or Hislop and Sigal \cite{HiSi96}, \S 15.4).

\begin{definition}
A family of linear closed operators $\cL(\kappa) : X \to Y$, with $X, Y$ Banach spaces, defined for $\kappa$ in an open complex domain $B \subset \C$ containing the origin, is said to be \emph{holomorphic of type (A)} if the domains are independent of $\kappa$, that is, $\cD(\cL(\kappa)) \equiv \cD$ for all $\kappa \in B$, and the mapping $\kappa \mapsto \cL(\kappa) \bu$ is holomorphic in $\kappa \in B$ for each $\bu \in \cD$. In that case, $\cL(\kappa)$ has a convergent Taylor expansion of the form
\[
\cL(\kappa) \bu = \cL^{(0)} \bu + \kappa \cL^{(1)} \bu + \kappa^2 \cL^{(2)} \bu + \ldots,
\]
converging in a disk $|\kappa| < r$ inside $B$ independent of $\bu$. Here $\cL(0) = \cL^{(0)}$ and $\cL^{(j)} : \cD \subset X \to Y$ are linear operators for each $j \in \N$.
\end{definition}

An important consequence of Lemma \ref{lemuno} is the following
\begin{lemma}
\label{lemtres}
There exists $\hat{\vep} > 0$ such that the family of operators \eqref{compf} is a holomorphic family of type (A) for $\kappa \in B_{\hat{\vep}}$.
\end{lemma}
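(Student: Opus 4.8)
The plan is to verify the two requirements in the definition of a holomorphic family of type (A): that the domain is independent of $\kappa$, and that $\kappa \mapsto \widetilde{\cL}_0(\kappa) \bu$ is holomorphic for each fixed $\bu \in \cD$. The first point is immediate from the construction in \eqref{compf}: both $\cL_0^0$ and $\cL_0^1$ are defined on the common domain $\cD = \Huper([0,\pi]) \times \Huper([0,\pi])$, and for any $\kappa \in \C$ the sum $\cL_0^0 + \kappa \cL_0^1$ is still defined precisely on $\cD$ (the domain of a sum is the intersection of the domains). So $\cD(\widetilde{\cL}_0(\kappa)) \equiv \cD$ for all $\kappa$.

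For the holomorphy, I would observe that for each fixed $\bu \in \cD$ the map $\kappa \mapsto \widetilde{\cL}_0(\kappa)\bu = \cL_0^0 \bu + \kappa \cL_0^1 \bu$ is an affine (degree-one polynomial) function of $\kappa$ with values in $\Ldper([0,\pi]) \times \Ldper([0,\pi])$; affine maps are entire, so this is trivially holomorphic on all of $\C$. The Taylor expansion in the definition then terminates: $\cL^{(0)} = \cL_0^0$, $\cL^{(1)} = \cL_0^1$, and $\cL^{(j)} = 0$ for $j \geq 2$, and it converges for all $\kappa$.

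The only genuine content — and the reason the statement is phrased with an unspecified radius $\hat{\vep} > 0$ — is that a family of type (A) must consist of \emph{closed} operators. So the real task is to show that $\widetilde{\cL}_0(\kappa) = \cL_0^0 + \kappa \cL_0^1$ is closed on $\cD$ for $\kappa$ in some disk $B_{\hat\vep}$. Here is where Lemma \ref{lemuno} enters: $\cL_0^1$ is $\cL_0^0$-bounded, so there are constants $\alpha, \beta \geq 0$ with $\| \cL_0^1 \bu \| \leq \alpha \| \bu \| + \beta \| \cL_0^0 \bu \|$ for all $\bu \in \cD$. By the standard perturbation-of-closedness theorem (Kato \cite{Kat80}, Thm.~IV.1.1), if $|\kappa| \beta < 1$ then $\cL_0^0 + \kappa \cL_0^1$ is closed (indeed closable is automatic and the relative bound $<1$ gives closedness) on the same domain $\cD$. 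So I would set $\hat{\vep} := 1/(2\beta)$ (or any value $< 1/\beta$; if $\beta = 0$ take $\hat{\vep}$ arbitrary), note that $\cL_0^0$ itself is closed since it is a first-order differential operator with smooth periodic coefficients and invertible principal part on the periodic Sobolev scale, and conclude that for every $\kappa \in B_{\hat\vep}$ the operator $\widetilde{\cL}_0(\kappa)$ is closed with domain $\cD$.

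I do not anticipate a serious obstacle; the proof is essentially an invocation of Lemma \ref{lemuno} together with the cited Kato theorem, the point being simply to package $\widetilde{\cL}_0(\kappa)$ correctly as a type-(A) family so that analytic perturbation theory for the isolated simple eigenvalue $\lambda_0 = b_1^0$ from Lemma \ref{lemdos} can be applied in the sequel. If anything needs a word of care, it is recording that $\cL_0^0$ is closed in the first place (which follows because $\bB^{-1}\bA_0$ is invertible, so $\cL_0^0$ has the same graph norm as the $\Huper \times \Huper$ norm) before adding the relatively bounded perturbation $\kappa \cL_0^1$.
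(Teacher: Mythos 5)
Your proof is correct and takes essentially the same route as the paper: both arguments hinge on the relative bound from Lemma \ref{lemuno} and conclude that the family is holomorphic of type (A) for $|\kappa| < \beta^{-1}$. The only difference is that the paper invokes the packaged result (Kato, Theorem VII-2.6 and Remark VII-2.7) which delivers the type-(A) conclusion directly from the relative bound, whereas you unpack the definition yourself (domain independence, polynomial dependence on $\kappa$, and closedness via Kato IV.1.1) — the substance and the required radius restriction are identical.
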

\begin{proof}
Since the operator $\cL_0^1$ is relatively bounded with respect to $\cL_0^0$ on the space $\Ldper([0,\pi]) \times \Ldper([0,\pi])$ (see Lemma \ref{lemuno}), then there exist $\alpha, \beta > 0$ such that
\[
\| \cL_0^1 \bu \|_{\Ldper \times \Ldper} \leq \alpha \| \bu \|_{\Ldper \times \Ldper} + \beta \| \cL_0^0 \bu \|_{\Ldper \times \Ldper},
\]
for all $\bu \in \cD = \Huper([0,\pi]) \times \Huper([0,\pi])$. Hence, we apply Theorem VII-2.6 and Remark VII-2.7 in \cite{Kat80}, pp. 377-378, to conclude that there exists a radius $\hat{\vep} > 0$ such that the family $\widetilde{\cL}_0(\kappa) = \cL_0^0 + \kappa \cL_0^1$ is holomorphic of type (A) for $|\kappa| < \hat{\vep}$ (actually, $\hat{\vep} < \beta^{-1}$). Notice that the operators $\cL_0^j$, $j=0,1$, are closed and have common domain $\cD$.
\end{proof}

\begin{remark}
In contrast with the viscous case \cite{AlPl21}, here the unperturbed operator $\cL_0^0$ is not self-adjoint and we rely on the property of \eqref{compf} of being an holomorphic family of type (A) and on the simplicity of the unstable eigenvalue (Lemma \ref{lemdos}) in order to conclude the analyticity of the eigenvalues of the perturbed problem as we shall see below.
\end{remark}

\begin{lemma}
\label{lemcuatro}
There exists $\vep_2 > 0$ and an analytic family of discrete, non-degenerate (i.e. simple) eigenvalues $\lambda(\kappa)$ of the operators $\widetilde{\cL}_0(\kappa) = \cL_0^0 + \kappa \cL_0^1$ for $|\kappa| < \vep_2$ such that $\lambda(0) = \lambda_0 = b_1^0$. Moreover, for each $0 < \vep < \vep_2$ there holds
\begin{equation}
\label{intersecti}
\ptsp(\cL_0^0 + \kappa \cL_0^1)_{|\Ldper \times \Ldper} \cap \{ \lambda \in \C \, : \, |\lambda - b_1^0| < r(\vep)\} \neq \varnothing,
\end{equation}
for some $0 < r(\vep) = O(\vep)$.
\end{lemma}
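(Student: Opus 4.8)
The plan is to invoke the analytic perturbation theory for holomorphic families of type (A), which applies here thanks to Lemma~\ref{lemtres}, taking as unperturbed datum the algebraically simple isolated eigenvalue $\lambda_0 = b_1^0$ of $\widetilde{\cL}_0(0) = \cL_0^0$ provided by Lemma~\ref{lemdos}. First I would fix a small positively oriented circle $\Gamma = \{\lambda \in \C : |\lambda - b_1^0| = \rho_0\}$ whose interior contains $\lambda_0$ and meets $\sigma(\cL_0^0)$ only at $\lambda_0$ (possible since $\lambda_0$ is isolated). For $\zeta \in \Gamma \subset \rho(\cL_0^0)$ write
\[
\widetilde{\cL}_0(\kappa) - \zeta = \bigl( \Id + \kappa\, \cL_0^1 (\cL_0^0 - \zeta)^{-1} \bigr)(\cL_0^0 - \zeta),
\]
and use the relative bound of Lemma~\ref{lemuno} in the form $\|\cL_0^1 (\cL_0^0-\zeta)^{-1}\bv\| \le \alpha \|(\cL_0^0-\zeta)^{-1}\bv\| + \beta \|\Id + \zeta(\cL_0^0-\zeta)^{-1}\|\,\|\bv\|$, together with the fact that $\|(\cL_0^0-\zeta)^{-1}\|$ is bounded on the compact set $\Gamma$, to see that $\cL_0^1(\cL_0^0-\zeta)^{-1}$ is uniformly bounded on $\Gamma$, say by $M>0$. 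Hence for $|\kappa| < \min\{\hat{\vep}, 1/M\}$ the Neumann series for $(\Id + \kappa\,\cL_0^1(\cL_0^0-\zeta)^{-1})^{-1}$ converges uniformly in $\zeta \in \Gamma$, so $\Gamma \subset \rho(\widetilde{\cL}_0(\kappa))$ and $(\widetilde{\cL}_0(\kappa) - \zeta)^{-1}$ is jointly analytic in $(\kappa,\zeta)$.

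Next I would introduce the Riesz spectral projection
\[
P(\kappa) := -\frac{1}{2\pi i} \oint_\Gamma (\widetilde{\cL}_0(\kappa) - \zeta)^{-1} \, d\zeta,
\]
which by the previous step is analytic in $\kappa$ on a disk $B_{\vep_2}$, with $P(0)$ the rank-one projection onto $\Span\{\bu_0\}$ (algebraic simplicity, Lemma~\ref{lemdos}). Since $\kappa \mapsto P(\kappa)$ is continuous in operator norm and projections whose difference has norm $<1$ have equal rank, shrinking $\vep_2$ if necessary gives $\operatorname{rank} P(\kappa) = 1$ for $|\kappa| < \vep_2$. Therefore $\sigma(\widetilde{\cL}_0(\kappa))$ meets the interior of $\Gamma$ in exactly one eigenvalue, of algebraic multiplicity one; denote it $\lambda(\kappa)$. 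It is recovered from
\[
\lambda(\kappa)\, P(\kappa) = -\frac{1}{2\pi i}\oint_\Gamma \zeta\,(\widetilde{\cL}_0(\kappa) - \zeta)^{-1}\,d\zeta, \qquad \lambda(\kappa) = \tr\bigl(\widetilde{\cL}_0(\kappa) P(\kappa)\bigr),
\]
so $\lambda(\kappa)$ is analytic in $|\kappa| < \vep_2$ with $\lambda(0) = \lambda_0 = b_1^0$; this is exactly Kato's reduction process for holomorphic families of type (A) (see \cite{Kat80}, Ch.~VII, and \cite{HiSi96}, \S 15.4). The essential point — and the reason Lemma~\ref{lemdos} was stated with \emph{algebraic} and not merely geometric simplicity — is that, with $\lambda_0$ carrying no Jordan chain, the associated $\lambda$-group is a single ordinary holomorphic branch rather than an algebroidal branch with a genuine branch point.

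Finally, analyticity yields the expansion $\lambda(\kappa) = b_1^0 + \lambda'(0)\kappa + O(\kappa^2)$, hence a constant $C>0$ with $|\lambda(\kappa) - b_1^0| \le C|\kappa|$ for $|\kappa| < \vep_2$ (shrinking $\vep_2$ once more). Specializing to the real parameter $\kappa = \vep \in (0,\vep_2)$ and setting $r(\vep) := 2C\vep = O(\vep)$, the eigenvalue $\lambda(\vep) \in \ptsp(\cL_0^0 + \vep\,\cL_0^1)_{|\Ldper\times\Ldper}$ lies in $\{\lambda \in \C : |\lambda - b_1^0| < r(\vep)\}$, which establishes \eqref{intersecti}. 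The main obstacle, compared with the viscous case \cite{AlPl21}, is that $\cL_0^0$ is not self-adjoint, so one cannot invoke self-adjoint analytic perturbation theory; the substitute is the uniform invertibility of $\Id + \kappa\,\cL_0^1(\cL_0^0-\zeta)^{-1}$ on $\Gamma$, which rests precisely on the relative boundedness of Lemma~\ref{lemuno}, combined with the algebraic simplicity of $\lambda_0$.
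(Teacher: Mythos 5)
Your proof is correct and takes essentially the same approach as the paper: both hinge on the holomorphic-family-of-type-(A) framework from Lemma~\ref{lemtres} together with the algebraic simplicity of $\lambda_0$ from Lemma~\ref{lemdos}; you simply unfold the Riesz-projection proof underlying the cited Hislop--Sigal Theorem~15.10 rather than invoking it as a black box. One small refinement over the paper's own wording: you correctly obtain $r(\vep)=O(\vep)$ from the analytic expansion $\lambda(\kappa)=b_1^0+\lambda'(0)\kappa+O(\kappa^2)$, whereas the paper appeals only to continuity, which by itself would give $r(\vep)=o(1)$ rather than the stated $O(\vep)$ rate.
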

\begin{proof}
From Lemmata \ref{lemtres} and \ref{lemdos} we know that $\cL_0^0 + \kappa \cL_0^1$ is an holomorphic family of type (A) for $|\kappa| < \hat{\vep}$ and that $\lambda_0 = b_1^0$ is a discrete, non-degenerate (simple) eigenvalue of $\cL_0^0$. Therefore, we apply Theorem 15.10 in Hislop and Sigal \cite{HiSi96}, p. 155, to conclude that there exists an analytic expansion
\[
\lambda(\kappa) = \lambda_0 + \kappa \lambda_1 + \kappa^2 \lambda_2 + \ldots,
\]
convergent in a (possible smaller) neighborhood of the origin, say $|\kappa| < \vep_2 \leq \hat{\vep}$, such that $\lambda(\kappa)$ is a discrete, simple eigenvalue of $\widetilde{\cL}_0(\kappa)$ with $\lambda(0) = \lambda_0$. Property \eqref{intersecti} follows from continuity.
\end{proof}

We are now ready to prove the main instability result.

\subsection{Proof of Theorem \ref{theoinst}}

Since $\vep = \sqrt{\ep}$, we choose $0 < \ep_2 := \min \{\ep_1, \vep_2^2\}$ and apply Lemma \ref{lemcuatro} to conclude that there exists $\theta_0 := 0 \in (-\pi,\pi]$ for which the spectrum of the perturbed Bloch operator with $\theta = 0$ and $\ep \in (0, \ep_2)$ intersects the unstable half plane (in view that $\lambda_0 = b_1^0 > 0$):
\[
\ptsp(\cL_{0}^\ep)_{|\Ldper \times \Ldper} \cap \{ \lambda \in \C \, : \, \Re \lambda > 0\} \neq \varnothing.
\]
Therefore, from the spectral instability criterion stated in Proposition \ref{propcrit} we obtain the result. Notice that if we vary $\theta \approx 0$ (within a small neighborhood of the origin) we obtain curves of Floquet spectrum that locally remain in the unstable half plane. This completes the proof of Theorem \ref{theoinst}. 
\qed

\section{Concluding remarks}
\label{secconcl}

In this paper, we have proved the existence of bounded periodic waves to a hyperbolic system modeling (non-Fickian) diffusion of Cattaneo-Maxwell type and nonlinear advection of a scalar quantity, together with a source term of logistic type. For that purpose we have showed that, under certain structural assumptions, a family of small-amplitude and finite period waves emerges from a local Hopf bifurcation around a critical value of the wave speed. These waves are subcharacteristic, that is, they satisfy condition \eqref{subchar}. The result is valid for all parameter values of the relaxation time $\tau$ which remain below a threshold value. Although the hyperbolic system \eqref{hypVBL} is designed for small values of $\tau$ (in order to be consistent with physical or biological applications), it is not clear whether our existence result can be extended beyond the subcharacteristic regime. We also presented some numerical approximations of the periodic waves in the case of the hyperbolic Burgers-Fisher model.

In addition, we have studied the stability of the family of periodic waves as solutions to the PDE system. In particular, we proved that these waves are spectrally unstable, or in other words, that the Floquet spectrum of the linearized operator intersects the unstable complex half plane. Heuristically, this instability result can be interpreted as follows. When the small parameter tends to zero, $\ep \to 0$, these small-amplitude waves shrink to the zero equilibrium solution and the linearized operator around the wave tends (formally) to a constant coefficient linearized operator around it. The latter has a spectrum determined by a dispersion relation that intersects the unstable half plane thanks to the positive sign of the reaction term at the rest state ($g'(0) > 0$). We then invoke the classical perturbation theory for linear operators to conclude. This technique has been recently applied to study stability of small-amplitude waves for scalar Hamiltonian equations \cite{KDT19}, scalar viscous balance laws \cite{AlPl21}, and scalar reaction-diffusion equations coupled to one ordinary differential equation \cite{CheDu}. This appears to be a general phenomenon and the method could be used to study the instability of small periodic waves arising in other contexts. From a technical viewpoint, it is to be observed that we proved that the unstable eigenvalue of the unperturbed operator is simple or non-degenerate (see Lemma \ref{lemdos}). This happens in other models as well (see, e.g., \cite{CheDu}). As a consequence of this non-degeneracy there exists an analytic expression for the (also non-degenerate) eigenvalues of the perturbed problem, as the theory guarantees. This non-degeneracy, however, is by no means a necessary condition to conclude instability. (In the Hamiltonian case, for example, pairs of \emph{collided eigenvalues} of the linearized operator at zero amplitude usually appear \cite{KDT19}.) Indeed, in the general case with eigenvalues of higher multiplicity the expansions for eigenvalues may no longer be analytic, but they can be represented by Pusieux series which remain continuous in the limit. Thus, we strongly advocate for perturbation theory as a technique to examine instabilities of small amplitude waves in more general situations. Finally, and for the particular hyperbolic system considered in this paper, we highlight the existence of large period waves arising from a \emph{global} homoclinic bifurcation (such as in the parabolic case \cite{AlPl21}) and the orbital (nonlinear) stability analysis of both families of waves, as open problems which are worthy of further investigations.


\section*{Acknowledgements}

E. \'{A}lvarez gratefully acknowledges the support of the Program ``Apoyos FE\-NO\-MEC 2021''. The work of R. G. Plaza was partially supported by DGAPA-UNAM, program PAPIIT, grant IN-104922.

\def\cprime{$'\!\!$} \def\cprimel{$'\!$}





\begin{thebibliography}{10}

\bibitem{AGJ90}
{\sc J.~Alexander, R.~A. Gardner, and C.~K. R.~T. Jones}, {\em A topological
  invariant arising in the stability analysis of travelling waves}, J. Reine
  Angew. Math. \textbf{410} (1990), pp.~167--212.

\bibitem{AlPl21}
{\sc E.~\'{A}lvarez and R.~G. Plaza}, {\em Existence and spectral instability
  of bounded spatially periodic traveling waves for scalar viscous balance
  laws}, Quart. Appl. Math. \textbf{79} (2021), no.~3, pp.~493--544.

\bibitem{Andro29}
{\sc A.~A. Andronov}, {\em Les cycles limites de {P}oincar\'{e} et la
  th\'{e}orie des oscillations auto–entretenues}, C.R. Acad. Sci. Paris
  \textbf{189} (1929), pp.~559--561.

\bibitem{BCN14}
{\sc E.~Bouin, V.~Calvez, and G.~Nadin}, {\em Hyperbolic traveling waves driven
  by growth}, Math. Models Methods Appl. Sci. \textbf{24} (2014), no.~6,
  pp.~1165--1195.

\bibitem{Bur48}
{\sc J.~M. Burgers}, {\em A mathematical model illustrating the theory of
  turbulence}, in Advances in {A}pplied {M}echanics, R.~von Mises and T.~von
  K{\'a}rm{\'a}n, eds., Academic Press Inc., New York, N. Y., 1948,
  pp.~171--199.

\bibitem{Catt49}
{\sc C.~Cattaneo}, {\em Sulla conduzione del calore}, Atti Sem. Mat. Fis. Univ.
  Modena \textbf{3} (1949), pp.~83--101.

\bibitem{Catt58}
{\sc C.~Cattaneo}, {\em Sur une forme de
  l'\'{e}quation de la chaleur \'{e}liminant le paradoxe d'une propagation
  instantan\'{e}e}, C. R. Acad. Sci. Paris \textbf{247} (1958), pp.~431--433.

\bibitem{CheDu}
{\sc S.~Chen and J.~Duan}, {\em Instability of small-amplitude periodic waves
  from fold-{H}opf bifurcation}.
\newblock Preprint, 2020. arXiv:2012.07484.

\bibitem{Crank75}
{\sc J.~Crank}, {\em The mathematics of diffusion}, Clarendon Press, Oxford,
  second~ed., 1975.

\bibitem{CroMas07}
{\sc E.~C.~M. Crooks and C.~Mascia}, {\em Front speeds in the vanishing
  diffusion limit for reaction-diffusion-convection equations}, Differ.
  Integral Equ. \textbf{20} (2007), no.~5, pp.~499--514.

\bibitem{Da4e}
{\sc C.~M. Dafermos}, {\em Hyperbolic conservation laws in continuum physics},
  vol.~325 of Grundlehren der Mathematischen Wissenschaften, Springer-Verlag,
  Berlin, fourth~ed., 2016.

\bibitem{DunOth86}
{\sc S.~R. Dunbar and H.~G. Othmer}, {\em On a nonlinear hyperbolic equation
  describing transmission lines, cell movement, and branching random walks}, in
  Nonlinear oscillations in biology and chemistry ({S}alt {L}ake {C}ity,
  {U}tah, 1985), H.~G. Othmer, ed., vol.~66 of Lecture Notes in Biomath.,
  Springer, Berlin, 1986, pp.~274--289.

\bibitem{DunSch2}
{\sc N.~Dunford and J.~T. Schwartz}, {\em Linear operators. {P}art {II}:
  Spectral theory. Selfadjoint operators in Hilbert space}, Wiley Classics
  Library, John Wiley \& Sons Inc., New York, 1988.

\bibitem{Fedo98}
{\sc S.~Fedotov}, {\em Traveling waves in a reaction-diffusion system:
  diffusion with finite velocity and {K}olmogorov-{P}etrovskii-{P}iskunov
  kinetics}, Phys. Rev. E (3) \textbf{58} (1998), no.~4, pp.~5143--5145.

\bibitem{Fick95}
{\sc A.~Fick}, {\em On liquid diffusion}, J. Membr. Sci. \textbf{100} (1995),
  no.~1, pp.~33--38.
\newblock Reprint of the 1855 original.

\bibitem{Fis37}
{\sc R.~A. Fisher}, {\em The wave of advance of advantageous genes}, Ann.
  Eugen. \textbf{7} (1937), pp.~355--369.

\bibitem{Grd1}
{\sc R.~A. Gardner}, {\em On the structure of the spectra of periodic
  travelling waves}, J. Math. Pures Appl. (9) \textbf{72} (1993), no.~5,
  pp.~415--439.

\bibitem{GCNPC10}
{\sc H.~Gomez, I.~Colominas, F.~Navarrina, J.~Par\'{\i}s, and M.~Casteleiro},
  {\em A hyperbolic theory for advection-diffusion problems: mathematical
  foundations and numerical modeling}, Arch. Comput. Methods Eng. \textbf{17}
  (2010), no.~2, pp.~191--211.

\bibitem{GoOl21}
{\sc M.~Gorgone and F.~Oliveri}, {\em Consistent approximate {Q}-conditional
  symmetries of {PDE}s: application to a hyperbolic
  reaction-diffusion-convection equation}, Z. Angew. Math. Phys. \textbf{72}
  (2021), no.~3, pp.~Paper No. 119, 25.

\bibitem{GuHo83}
{\sc J.~Guckenheimer and P.~Holmes}, {\em Nonlinear oscillations, dynamical
  systems, and bifurcations of vector fields}, vol.~42 of Applied Mathematical
  Sciences, Springer-Verlag, New York, 1983.

\bibitem{Had88}
{\sc K.~P. Hadeler}, {\em Hyperbolic travelling fronts}, Proc. Edinburgh Math.
  Soc. (2) \textbf{31} (1988), no.~1, pp.~89--97.

\bibitem{Had94}
{\sc K.~P. Hadeler}, {\em Travelling fronts
  for correlated random walks}, Canad. Appl. Math. Quart. \textbf{2} (1994),
  no.~1, pp.~27--43.

\bibitem{Had96}
{\sc K.~P. Hadeler},  {\em Reaction telegraph
  equations and random walk systems}, in Stochastic and spatial structures of
  dynamical systems ({A}msterdam, 1995), S.~J. van Strien and S.~M.
  Verduyn~Lunel, eds., vol.~45 of Konink. Nederl. Akad. Wetensch. Verh. Afd.
  Natuurk. Eerste Reeks, North-Holland, Amsterdam, 1996, pp.~133--161.
\newblock Proceedings of the meeting held in {A}msterdam, {J}anuary 1995.

\bibitem{Had99}
{\sc K.~P. Hadeler},  {\em Reaction transport
  systems in biological modelling}, in Mathematics inspired by biology
  ({M}artina {F}ranca, 1997), V.~Capasso and O.~Diekmann, eds., vol.~1714 of
  Lecture Notes in Math., Springer, Berlin, 1999, pp.~95--150.

\bibitem{HaKo91}
{\sc J.~K. Hale and H.~Ko\c{c}ak}, {\em Dynamics and bifurcations}, vol.~3 of
  Texts in Applied Mathematics, Springer-Verlag, New York, 1991.

\bibitem{Hae03}
{\sc J.~H\"{a}rterich}, {\em Viscous profiles of traveling waves in scalar
  balance laws: the canard case}, Methods Appl. Anal. \textbf{10} (2003),
  no.~1, pp.~97--117.

\bibitem{HaeSa06}
{\sc J.~H\"{a}rterich and K.~Sakamoto}, {\em Front motion in viscous
  conservation laws with stiff source terms}, Adv. Differ. Equ. \textbf{11}
  (2006), no.~7, pp.~721--750.

\bibitem{Hrrm12}
{\sc L.~Herrmann}, {\em Hyperbolic diffusion equation}, AIP Conf. Proc.
  \textbf{1504} (2012), no.~1, pp.~1337--1340.

\bibitem{Hil97}
{\sc T.~Hillen}, {\em Invariance principles for hyperbolic random walk
  systems}, J. Math. Anal. Appl. \textbf{210} (1997), no.~1, pp.~360--374.

\bibitem{Hil98}
{\sc T.~Hillen}, {\em Qualitative analysis
  of semilinear {C}attaneo equations}, Math. Models Methods Appl. Sci.
  \textbf{8} (1998), no.~3, pp.~507--519.

\bibitem{HiSi96}
{\sc P.~D. Hislop and I.~M. Sigal}, {\em Introduction to spectral theory. With
  applications to Schr{\"o}dinger operators}, vol.~113 of Applied Mathematical
  Sciences, Springer-Verlag, New York, 1996.

\bibitem{Holm93}
{\sc E.~E. Holmes}, {\em Is diffusion too simple? {C}omparisons with a
  telegraph model of dispersal}, American Naturalist \textbf{142} (1993),
  no.~5, pp.~779--796.

\bibitem{Hop42}
{\sc E.~Hopf}, {\em Abzweigung einer periodischen {L}\"osung von einer
  station\"aren {L}\"osung eines {D}ifferentialsystems}, Ber. Math.-Phys. Kl.
  S\"achs. Akad. Wiss. Leipzig \textbf{94} (1942), pp.~1--22.

\bibitem{JMMP14}
{\sc C.~K. R.~T. Jones, R.~Marangell, P.~D. Miller, and R.~G. Plaza}, {\em
  Spectral and modulational stability of periodic wavetrains for the nonlinear
  {K}lein-{G}ordon equation}, J. Differ. Equ. \textbf{257} (2014), no.~12,
  pp.~4632--4703.

\bibitem{JoPr89}
{\sc D.~D. Joseph and L.~Preziosi}, {\em Heat waves}, Rev. Modern Phys.
  \textbf{61} (1989), no.~1, pp.~41--73.

\bibitem{Kac74}
{\sc M.~Kac}, {\em A stochastic model related to the telegrapher's equation},
  Rocky Mountain J. Math. \textbf{4} (1974), pp.~497--509.

\bibitem{KaPro13}
{\sc T.~Kapitula and K.~Promislow}, {\em Spectral and dynamical stability of
  nonlinear waves}, vol.~185 of Applied Mathematical Sciences, Springer-Verlag,
  New York, 2013.

\bibitem{Kat80}
{\sc T.~Kato}, {\em Perturbation Theory for Linear Operators}, Classics in
  Mathematics, Springer-{V}erlag, {N}ew {Y}ork, {S}econd~ed., 1980.

\bibitem{KDT19}
{\sc R.~Koll\'{a}r, B.~Deconinck, and O.~Trichtchenko}, {\em Direct
  characterization of spectral stability of small-amplitude periodic waves in
  scalar {H}amiltonian problems via dispersion relation}, SIAM J. Math. Anal.
  \textbf{51} (2019), no.~4, pp.~3145--3169.

\bibitem{KPP37}
{\sc A.~N. Kolmogorov, I.~Petrovsky, and N.~Piskunov}, {\em Etude de
  l’{\'e}quation de la diffusion avec croissance de la quantit{\'e} de
  matiere et son applicationa un probleme biologique}, Mosc. Univ. Bull. Math
  \textbf{1} (1937), pp.~1--25.

\bibitem{Kuz982e}
{\sc Y.~A. Kuznetsov}, {\em Elements of applied bifurcation theory}, vol.~112
  of Applied Mathematical Sciences, Springer-Verlag, New York, second~ed.,
  1998.

\bibitem{LMPS16}
{\sc C.~Lattanzio, C.~Mascia, R.~G. Plaza, and C.~Simeoni}, {\em Analytical and
  numerical investigation of traveling waves for the {A}llen-{C}ahn model with
  relaxation}, Math. Models Methods Appl. Sci. \textbf{26} (2016), no.~5,
  pp.~931--985.

\bibitem{LMPS19}
{\sc C.~Lattanzio, C.~Mascia, R.~G. Plaza, and C.~Simeoni}, {\em Kinetic schemes for
  assessing stability of traveling fronts for the {A}llen--{C}ahn equation with
  relaxation}, Appl. Numer. Math. \textbf{141} (2019), pp.~234--247.

\bibitem{La57}
{\sc P.~D. Lax}, {\em Hyperbolic systems of conservation laws {I}{I}}, Comm.
  Pure Appl. Math. \textbf{10} (1957), pp.~537--566.

\bibitem{La73}
{\sc P.~D. Lax}, {\em Hyperbolic Systems
  of Conservation Laws and the Mathematical Theory of Shock Waves}, no.~11 in
  CBMS-NSF Regional Conference Series in Applied Mathematics, SIAM,
  Philadelphia, 1973.

\bibitem{L87}
{\sc T.-P. Liu}, {\em Hyperbolic conservation laws with relaxation}, Comm.
  Math. Phys. \textbf{108} (1987), no.~1, pp.~153--175.

\bibitem{MaMcC76}
{\sc J.~E. Marsden and M.~McCracken}, {\em The {H}opf bifurcation and its
  applications}, vol.~19 of Applied Mathematical Sciences, Springer-Verlag, New
  York, 1976.

\bibitem{Maxw1867}
{\sc J.~C. Maxwell}, {\em On the dynamical theory of gases}, Trans. Royal Soc.
  London \textbf{157} (1867), pp.~49--88.

\bibitem{San02}
{\sc B.~Sandstede}, {\em Stability of travelling waves}, in Handbook of
  dynamical systems, {V}ol. 2, B.~Fiedler, ed., North-Holland, Amsterdam, 2002,
  pp.~983--1055.

\bibitem{Strg15}
{\sc S.~H. Strogatz}, {\em Nonlinear dynamics and chaos. With applications to
  physics, biology, chemistry, and engineering}, Westview Press, Boulder, CO,
  second~ed., 2015.

\bibitem{Verdu05}
{\sc G.~F. Verduzco}, {\em The first {L}yapunov coefficient for a class of
  systems}, IFAC Proc. Vols. \textbf{38} (2005), no.~1, pp.~1205--1209.
\newblock 16th IFAC World Congress.

\bibitem{Verh1838}
{\sc P.-F. Verhulst}, {\em Notice sur la loi que la population suit dans son
  accroissement}, Corr. Math. Phys. \textbf{10} (1838), pp.~113--121.

\bibitem{Verno58}
{\sc P.~Vernotte}, {\em Les paradoxes de la th\'{e}orie continue de
  l'\'{e}quation de la chaleur}, C. R. Acad. Sci. Paris \textbf{246} (1958),
  pp.~3154--3155.

\end{thebibliography}

\end{document}